\newtheorem{thm}{Theorem}[section]
\newtheorem{lem}[thm]{Lemma}
\newtheorem{remark}[thm]{Remark}
\numberwithin{equation}{section}
\journal{}
\begin{document}
\begin{spacing}{1.15}
\begin{frontmatter}
\title{Lexicographical ordering by spectral moments of bicyclic hypergraphs}

\author{Hong Zhou}
\author{Changjiang Bu}\ead{buchangjiang@hrbeu.edu.cn}
\address{School of Mathematical Sciences, Harbin Engineering University, Harbin 150001, PR China}

\begin{abstract}

For bicyclic hypergraphs, ordering by spectral moment ($S$-order) is investigated in this paper.
We give the first and last hypergraphs in an $S$-order of linear bicyclic uniform hypergraphs with given girth and number of edges.

\end{abstract}

\begin{keyword}
hypergraph, spectral moment, adjacency tensor\\
\emph{MSC:}
05C65, 15A18
\end{keyword}
\end{frontmatter}

\section{Introduction}
For a $k$-uniform hypergraph $\mathcal{H}$ with $n$ vertices, the sum of $d$ powers of all eigenvalues of the adjacency tensor $\mathcal{A}_\mathcal{H}$ of $\mathcal{H}$ is called the \textit{$d$th order  spectral moment} of $\mathcal{H}$, denoted by $\mathrm{S}_{d}(\mathcal{H})$. The number of eigenvalues of $\mathcal{A}_\mathcal{H}$ is $n(k-1)^{n-1}$ \cite{qi2005eigenvalues,cooper2012spectra}.
For two $k$-uniform hypergraphs $\mathcal{H}_{1}$ and $\mathcal{H}_{2}$ with $n$ vertices,
if there exists an $l\in\{1,2,\ldots,n(k-1)^{n-1}-1\}$ such that $\mathrm{S}_{i}(\mathcal{H}_{1})=\mathrm{S}_{i}(\mathcal{H}_{2})$ for $i=0,1,\ldots,l-1$ and
$\mathrm{S}_{l}(\mathcal{H}_{1})<\mathrm{S}_{l}(\mathcal{H}_{2})$, write $\mathcal{H}_{1}\prec_{s} \mathcal{H}_{2}$.
If $\mathrm{S}_{i}(\mathcal{H}_{1})=\mathrm{S}_{i}(\mathcal{H}_{2})$ for $i=0,1,\ldots,n(k-1)^{n-1}-1$, write $\mathcal{H}_{1}=_{s} \mathcal{H}_{2}$. The above order is called the \textit{$S$-order} of hypergraphs \cite{2309.16925}. Clearly, when $\mathcal{H}_{1}$ and $\mathcal{H}_{2}$ are 2-uniform, the above order is the $S$-order of graphs.

The $S$-order of graphs had been used in producing graph catalogues \cite{Drago1984A}.
In 1987, Cvetkovi\'{c} and Rowlinson \cite{cvetkovic1987spectra} gave some definitions concerning orderings of graphs including $S$-order.
And the first and last graphs were given in an $S$-order of trees with given number of vertices, unicyclic graphs with given girth and number of vertices, respectively.
Wu and Fan \cite{2014On1111} characterized the first and
last graphs in an $S$-order of bicyclic graphs with given number of vertices.
Some results on an $S$-order have been obtained in the literature:
see \biboptions{sort&compress}\cite{WU20101707,PAN20111265,CHENG2012858} for trees, \cite{CHENG20121123} for unicyclic graphs and  \cite{SHUCHAO2013ON,10.1216/RMJ-2016-46-1-261} for connected graphs.

The study of spectral hypergraph theories via tensors has attracted extensive attention \biboptions{sort&compress}\cite{clark2021harary,cooper2,doi:10.1137/21M1404740,2022Thestabilizing}, especially the spectral moment of hypergraphs \cite{clark2021harary,ref23}.
Shao et al. established some formulas for the trace of tensors in terms of some graph parameters \cite{shao2015some}.
When the tensor is an adjacency tensor, the above formulas are the formulas for spectral moments of hypergraphs.
Fan et al. gave a formula for the spectral moment of
hypergraphs in terms of the number of arborescences \cite{FAN202389}.
Clark and Cooper gave an expression for the spectral moment of
hypergraphs and used this result to give the ``Harary-Sachs'' coefficient theorem for hypergraphs \cite{clark2021harary}.
A formula for the spectral moment of hypertrees and power hypergraphs was given, respectively \cite{doi:10.1080/03081087.2021.1953431,ref23}.


In this paper, for $k\geq 3$,
the explicit expressions of $2k$th and $3k$th order spectral moments of linear bicyclic $k$-uniform  hypergraphs are
obtained in terms of the number of some subhypergraphs. And we get when $k\nmid d$, the $d$th order spectral moment of linear bicyclic $k$-uniform  hypergraphs is equal to $0$.
Using expressions of spectral moments and operations of moving edges, we give the first and last hypergraphs in an $S$-order of linear bicyclic uniform hypergraphs with given girth and number of edges.

\section{Preliminaries}

A hypergraph $\mathcal{H}$ is called \textit{$k$-uniform} if every edge of $\mathcal{H}$ contains exactly $k$ vertices.
A vertex of $\mathcal{H}$ is called a \textit{cored vertex} if it has degree one.
An edge $e$ of $\mathcal{H}$ is called a \textit{pendant edge} if it contains exactly $|e|-1$ cored vertices.
A cored vertex in a pendant edge is also called a \textit{pendant vertex}.
A hypergraph $\mathcal{H}$ is called \textit{linear} if any two edges intersect into at most one vertex.
The \textit{$k$-power hypergraph} $G^{(k)}$ is the hypergraph which is obtained by adding $k-2$ vertices with degree one to each edge of the graph $G$.
Let $P_{q}$, $S_{q}$ and $C_{q}$ denote the path, the star and the cycle with $q$ edges, respectively.
A $k$-uniform hypergraph $\mathcal{H}$ is called $k$-partite, if the vertices of $\mathcal{H}$ can be partitioned into $k$ sets so that every edge uses exactly one vertex from each set \cite{cooper2012spectra}. A $k$-uniform hypergraph $\mathcal{H}=(V,E)$ is called $hm$-bipartite if there is a disjoint partition of the vertex set $V$ as $V=V_{1}\bigcup V_{2}$ such that $V_{1},V_{2}\neq\emptyset$ and every edge in $E$ intersects $V_{1}$ in exactly one vertex and $V_{2}$ the other $k-1$ vertices \cite{2014cccA}.
Let $\mathcal{H} =(V,E)$ be a $k$-uniform hypergraph.  If for every edge $e\in E$, there is a vertex
$i_{e}\in e$ such that the degree of the vertex $i_{e}$ is one, then $\mathcal{H}$ is a cored hypergraph \cite{2013Cored}.
The spectrum of a $k$-uniform hypergraph is said to be \textit{$k$-symmetric} if this spectrum is invariant under a rotation of an angle $2\pi/k$ in the complex plane \cite{shao2015some}.
A connected $k$-uniform hypergraph $\mathcal{H}$ with $n$ vertices and $m$ edges is called bicyclic if $n=m(k-1)-1$.
For $k\geq 3$, all linear bicyclic $k$-uniform hypergraphs with $n$ vertices and $m$ edges consist of the following two types $\mathcal{B}_{n}^{k}$ and $\mathcal{C}_{n}^{k}$.

For three integers $p, q, l$ with $q\geq p\geq3$ and $l\geq 0$, let $C_{1} (resp., C_{2})$ be a $k$-uniform
 hypercycle of length $p$ (resp., $q$) and $P = (u_{0},e_{1},u_{1},\ldots,e_{l},u_{l})$ be a $k$-uniform hyperpath
of length $l$. Let $v_{1,1}\in V(C_{1}), v_{2,1}\in V(C_{2})$ be two arbitrary vertices with degree 1, and
let $v_{1,2}\in V(C_{1}), v_{2,2}\in V(C_{2})$ be two arbitrary vertices with degree 2. For $n'= (p+q+l)(k-1)-1$, let $B_{1,n',p,l,q}^{k}$
be the $n'$-vertex $k$-uniform bicyclic hypergraph obtained by
identifying $v_{1,2}$ with $u_{0}$, and identifying $v_{2,2}$ with $u_{l}$, let $B_{2,n',p,l,q}^{k}$
be the $n'$-vertex $k$-uniform
bicyclic hypergraph obtained by identifying $v_{1,2}$ with $u_{0}$, and identifying $v_{2,1}$ with $u_{l}$, and
let $B_{3,n',p,l,q}^{k}$
be the $n'$-vertex $k$-uniform bicyclic hypergraph obtained by identifying $v_{1,1}$ with $u_{0}$, and identifying $v_{2,1}$ with $u_{l}$.

For $n\geq n'$ and $i\in \{1, 2, 3\}$, let $\mathcal{B}_{i,n,p,l,q}^{k}$ be the set of $n$-vertex $m$-edge $k$-uniform bicyclic hypergraphs
each of which contains $B_{i,n',p,l,q}^{k}$ as a sub-hypergraph, where $m=\frac{n+1}{k-1}$. And $B_{i,n',p,l,q}^{k}$ is called the base of $\mathcal{B}_{i,n,p,l,q}^{k}$.
Let $\mathcal{B}_{n}^{k}=\bigcup_{i=1}^{3}\{\mathcal{B}_{i,n,p,l,q}^{k}~|~q\geq p\geq3,l\geq 0\}$.
Moreover, let $B_{1,n,p,0,q}^{k}(m-p-q)$ denote the $k$-uniform bicyclic hypergraph
obtained from $B_{1,n',p,0,q}^{k}$ by attaching $m-p-q$ pendant edges at the unique vertex with
degree 4, where $m=\frac{n+1}{k-1}$.

Let $P_{p}=(u_{1},e_{1},u_{2},\ldots,e_{p},u_{p+1}), P_{q}=(v_{1}, f_{1}, v_{2},\ldots, f_{q}, v_{q+1})$ and $P_{l}=(w_{1},g_{1},\\w_{2},\ldots,g_{l},w_{l+1})$ be three $k$-uniform hyperpaths, and suppose $(p+q+l)(k-1)-1=n'$. For three integers $p, q, l$ with $p=1,1<q\leq l$ and $1<p\leq q\leq l$, let $C_{1,n',p,q,l}^{k}$ be the $n'$-vertex $k$-uniform bicyclic hypergraph obtained from $P_{p}, P_{q}$ and $P_{l}$
by identifying three vertices $u_{1}, v_{1}, w_{1},$ and identifying three vertices $u_{p+1}, v_{q+1}, w_{l+1}$.
For $q>1, 1\leq p\leq q-1\leq l$ and $q=1, 1<p\leq l$, let $C_{2,n',p,q,l}^{k}$ be the $n'$-vertex $k$-uniform bicyclic hypergraph obtained from $P_{p}, P_{q}$ and
$P_{l}$ by identifying three vertices $u_{1}, v_{1}, w_{1}$, identifying $u_{p+1}$ with $v_{q+1}$, and identifying
$w_{l+1}$ with $v$, respectively, where $v\in f_{q}\setminus \{v_{q}, v_{q+1}\}$.
For $q>2, 1\leq p\leq q-2\leq l$ and $q=2,1\leq p\leq l$ and $q=1, k>3,1<p\leq l$, let $C_{3,n',p,q,l}^{k}$ be the $n'$-vertex $k$-uniform bicyclic hypergraph obtained from $P_{p}, P_{q}$ and
$P_{l}$ by identifying $u_{1}$ with $v_{1}$, identifying $u_{p+1}$ with $v_{q+1}$, identifying
$w_{1}$ with $v'$, and identifying $w_{l+1}$ with $v''$, respectively, where $v'\in f_{1}\setminus \{v_{1}, v_{2}\}$ and $v''\in f_{q}\setminus \{v_{q}, v_{q+1}\}$.
(in the special case $q=1$, we choose $v'\neq v''$).

For $n\geq n'$ and $i\in \{1, 2, 3\}$, let $\mathcal{C}_{i,n,p,q,l}^{k}$ be the set of $n$-vertex $m$-edge $k$-uniform bicyclic hypergraphs
each of which contains $C_{i,n',p,q,l}^{k}$ as a sub-hypergraph, where $m=\frac{n+1}{k-1}$.
And $C_{i,n',p,q,l}^{k}$ is called the base of $\mathcal{C}_{i,n,p,q,l}^{k}$.
Let $\mathcal{C}_{n}^{k}=\{\mathcal{C}_{1,n,p,q,l}^{k} ~|~ p=1,1<q\leq l \text{~or~} 1<p\leq q\leq l\} \bigcup \{\mathcal{C}_{2,n,p,q,l}^{k}~|~ q=1,1<p\leq l \text{~or~} q>1, 1\leq p\leq q-1\leq l\} \bigcup \{\mathcal{C}_{3,n,p,q,l}^{k}~|~ q>2, 1\leq p\leq q-2\leq l\text{~or~} q=2,1\leq p\leq l  \text {~or~}q=1, k>3,1<p\leq l\}$.
Moreover, for $i\in \{1, 2\}$, let $C_{i,n,p,q,l}^{k}(m-p-q-l)$  denote the $k$-uniform bicyclic hypergraph
obtained from $C_{i,n',p,q,l}^{k}$ by attaching $m-p-q-l$ pendant edges at the vertex with
degree 3, where $m=\frac{n+1}{k-1}$.


For a positive integer $n$, let $[n]=\{1,2,\ldots,n\}$.
An order $k$ dimension $n$ complex \textit{tensor}
$
\mathcal{T}=\left( {t_{i_{1}\cdots i_{k}} } \right)
$
is a multidimensional array with $n^k$ entries on complex number field $\mathbb{C}$, where $i_{1},\ldots,i_{k}\in [n]$.
Denote the set of dimension $n$ complex vectors and the set of order $k$ dimension $n$ complex tensors by $\mathbb{C}^{n}$ and $\mathbb{C}^{[k,n]}$, respectively.
For $x=\left({x_1 ,\ldots ,x_n}\right)^\mathrm{T}\in\mathbb{C}^n$, $\mathcal{T}x^{k-1}$ is a vector in $\mathbb{C}^n$ whose $i$th component is
\begin{align*}
(\mathcal{T}x^{k-1})_i=\sum\limits_{i_{2},\ldots,i_{k}=1}^{n}t_{ii_{2}\cdots i_{k}}x_{i_{2}}\cdots x_{i_{k}}.
\end{align*}
If there exist $\lambda\in\mathbb{C}$ and a nonzero vector $x=\left({x_1 ,\ldots ,x_n}\right)^\mathrm{T}\in\mathbb{C}^n$ such that
$$\mathcal{T}x^{k-1}=\lambda x^{[k-1]},$$
then $\lambda$ is called an \textit{eigenvalue} of $\mathcal{T}$ and $x$ is an \textit{eigenvector} of $\mathcal{T}$ corresponding to $\lambda$ \cite{qi2005eigenvalues,lim2005singular},
where $x^{\left[ {k - 1} \right]}  = \left( {x_1^{k - 1} ,\ldots,x_n^{k - 1} } \right)^\mathrm{T}$.



For a $k$-uniform hypergraph $\mathcal{H}$ with $n$ vertices, its  \textit{adjacency tensor} is the order $k$ dimension $n$ tensor
$\mathcal{A}_\mathcal{H}=(a_{i_{1}i_{2}\cdots i_{k}})$ \cite{cooper2012spectra}, where
\begin{equation*}
a_{i_{1}i_{2}\cdots i_{k}}=\begin{cases}
\frac{1}{(k-1)!},& \text{if } \{i_{1},i_{2},\ldots,i_{k}\}\in E(\mathcal{H}),\notag \\
0,& \text{otherwise}.
\end{cases}
\end{equation*}

Let $\mathcal{H}$ be a $k$-uniform hypergraph with $n$ vertices. Given an ordering of the vertices of $\mathcal{H}$, let
$\mathcal{F}_{d}(\mathcal{H}):= \{{(e_{1}(v_{1}),\ldots, e_{d}(v_{d})) : e_{i}\in E(\mathcal{H}), v_{1}\leq\cdots\leq v_{d}}\}$,
be the set of $d$-tuples of ordered rooted edges, where $e_{i}(v_{i})$ is an edge $e_{i}$ with root $v_{i}\in e_{i}$ for each $i\in[d]$.
Define a rooted directed star $S_{e_{i}}(v_{i})=(e_{i}, \{(v_{i}, u) : u\in e_{i}\setminus\{v_{i}\}\})$
for each $i\in[d]$, and multi-directed graph $R(F)=\bigcup_{i=1}^{d}S_{e_{i}}(v_{i})$
associated with
$F\in\mathcal{F}_{d}(\mathcal{H})$. Let  $\mathcal{F}_{d}^{\epsilon}(\mathcal{H}):=\{F\in\mathcal{F}_{d}(\mathcal{H}): R(F) \text{~is Eulerian}\}$.
For an $F\in\mathcal{F}_{d}^{\epsilon}(\mathcal{H})$, let $V(F):=V(R(F))$, $r_{v}(F)$ be the number of edges in $F$ with $v$ as the root, and $d_{v}^{+}(F)=(k-1)r_{v}(F)$ (namely, the outdegree of $v$ in $R(F)$).
Let $\tau(F)$ be the number of arborescences of $R(F)$.
The $d$th order  spectral moment of $\mathcal{H}$ can be expressed as follows \cite{FAN202389}:
\begin{equation}\label{fan111}
\mathrm{S}_d(\mathcal{H})=d(k-1)^{n}\sum\limits_{F\in\mathcal{F}_{d}^{\epsilon}(\mathcal{H})}\frac{\tau(F)}{\prod_{v\in V(F)}d_{v}^{+}(F)}.
\end{equation}

The $d$th order spectral moments of a $k$-uniform hypergraph were given for $d=0,1,2,\ldots,k$ in \cite{cooper2012spectra}.
\begin{lem}\cite{cooper2012spectra}\label{L2}
Let $\mathcal{H}$ be a $k$-uniform hypergraph with $n$ vertices and $m$ edges. Then

(1) $\mathrm{S}_0(\mathcal{H})=n(k-1)^{n-1}$;

(2) $\mathrm{S}_d(\mathcal{H})=0$ for $d=1,2,\ldots, k-1$;

(3) $\mathrm{S}_{k}(\mathcal{H})=mk^{k-1}(k-1)^{n-k}$.

\end{lem}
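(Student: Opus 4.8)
The plan is to read off all three parts from the trace formula \eqref{fan111}. Part (1) needs nothing new: $\mathrm{S}_0(\mathcal{H})=\sum_i\lambda_i^{0}$ is just the number of eigenvalues of $\mathcal{A}_\mathcal{H}$ counted with algebraic multiplicity, which is $n(k-1)^{n-1}$. For (2) and (3) I would first describe $\mathcal{F}_d^{\epsilon}(\mathcal{H})$ in the range $1\le d\le k$. Fix $F=(e_1(v_1),\ldots,e_d(v_d))\in\mathcal{F}_d^{\epsilon}(\mathcal{H})$, and for $v\in V(F)$ set $r_v(F)=\#\{i:v_i=v\}$ and $m_v(F)=\#\{i:v\in e_i\}$. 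In the multi-digraph $R(F)$ the out-degree of $v$ is $(k-1)r_v(F)$ and the in-degree is $m_v(F)-r_v(F)$; since $R(F)$ is Eulerian it is balanced, so
\[
m_v(F)=k\,r_v(F)\qquad\text{for every }v\in V(F).
\]

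The step requiring the most care is then a short counting argument. Each $v\in V(F)$ lies in some $e_i$, so $m_v(F)\ge 1$, which forces $r_v(F)\ge 1$ (recall $m_v(F)=k\,r_v(F)$) and hence $m_v(F)\ge k$. Summing and using $\sum_{v\in V(F)}m_v(F)=\sum_{i=1}^{d}|e_i|=dk$ gives $|V(F)|\le d$; on the other hand $e_1\subseteq V(F)$ forces $|V(F)|\ge k$. Hence $\mathcal{F}_d^{\epsilon}(\mathcal{H})=\varnothing$ for $1\le d\le k-1$, and \eqref{fan111} yields (2). For $d=k$ both inequalities are tight, so $|V(F)|=k$ and $r_v(F)=1$, $m_v(F)=k$ for all $v\in V(F)$; since each $e_i$ is a $k$-subset of the $k$-set $V(F)$ we obtain $e_1=\cdots=e_k=V(F)=:e\in E(\mathcal{H})$, i.e.\ $F$ is the edge $e$ rooted exactly once at each of its $k$ vertices. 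The requirement $v_1\le\cdots\le v_k$ then makes $F$ unique, so $F\mapsto e$ is a bijection between $\mathcal{F}_k^{\epsilon}(\mathcal{H})$ and $E(\mathcal{H})$.

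It remains to evaluate the summand of \eqref{fan111} for such an $F$. Here $R(F)=\bigcup_{u\in e}S_e(u)$ is the complete digraph $\overrightarrow{K_k}$ on the vertex set $e$, so $d_v^{+}(F)=k-1$ for every $v$ and $\prod_{v\in V(F)}d_v^{+}(F)=(k-1)^{k}$, while the directed matrix--tree theorem gives $\tau(F)=k^{k-2}$ (the reduced Laplacian $kI_{k-1}-J_{k-1}$ of $\overrightarrow{K_k}$ has determinant $k^{k-2}$, a value independent of the chosen root since $R(F)$ is Eulerian). Substituting into \eqref{fan111},
\[
\mathrm{S}_k(\mathcal{H})=k(k-1)^{n}\sum_{e\in E(\mathcal{H})}\frac{k^{k-2}}{(k-1)^{k}}=m\,k^{k-1}(k-1)^{n-k},
\]
which is (3). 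I expect the only genuine obstacles to be the in-/out-degree bookkeeping for $R(F)$ and the small determinant evaluation for $\overrightarrow{K_k}$; the remainder is a formal manipulation of \eqref{fan111}.
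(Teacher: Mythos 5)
Your argument is correct. Note, however, that the paper offers no proof of this lemma at all: it is imported verbatim from \cite{cooper2012spectra}, where it is obtained by a different method (reading off low-codimension coefficients of the characteristic polynomial of the adjacency tensor via resultants), so there is no internal proof to compare against. What you have done is re-derive the cited result from the arborescence trace formula, Equation (\ref{fan111}), and every step checks out: the balance condition $m_v(F)=k\,r_v(F)$ for Eulerian $R(F)$, the double counting $\sum_v m_v(F)=dk$ forcing $k\le|V(F)|\le d$ and hence $\mathcal{F}_d^{\epsilon}(\mathcal{H})=\varnothing$ for $1\le d\le k-1$, the bijection $\mathcal{F}_k^{\epsilon}(\mathcal{H})\leftrightarrow E(\mathcal{H})$, and the evaluation $\tau(F)=k^{k-2}$, $\prod_v d_v^{+}(F)=(k-1)^k$ for the complete digraph on $e$. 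Your bookkeeping is moreover consistent with how the paper itself applies the same formula one level up: in Lemma \ref{w3}, Case 1, the authors use $\tau(F)=2^{k-1}k^{k-2}$ and $\prod_v d_v^{+}(F)=(2(k-1))^k$ for the doubled complete digraph, which is exactly what your reduced-Laplacian computation gives when each arc is duplicated. The one cosmetic caveat is part (1): $\mathrm{S}_0(\mathcal{H})=n(k-1)^{n-1}$ is really a statement about the total number of eigenvalues (degree of the characteristic polynomial), not something Equation (\ref{fan111}) produces, and you correctly treat it as such. Overall your route is self-contained modulo Equation (\ref{fan111}) and arguably more uniform than the original, since the same counting template extends directly to the $2k$th and $3k$th moments used later in the paper.
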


\begin{lem}\label{L3121}\cite{shao2015some}
Let $\mathcal{H}$ be a $k$-uniform hypergraph whose spectrum is $k$-symmetric. If $k\nmid d$, then $\mathrm{S}_{d}(\mathcal{H})=0$.
\end{lem}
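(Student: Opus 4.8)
The plan is to argue directly from the definition of the spectral moment as a power sum of eigenvalues, using only the rotational invariance of the spectrum. Let $N=n(k-1)^{n-1}$ and list the eigenvalues of $\mathcal{A}_\mathcal{H}$ with multiplicity as $\lambda_{1},\dots,\lambda_{N}$, so that by definition $\mathrm{S}_{d}(\mathcal{H})=\sum_{j=1}^{N}\lambda_{j}^{d}$. Put $\omega=e^{2\pi i/k}$, a primitive $k$th root of unity.

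The first step is to translate the hypothesis. Saying that the spectrum is $k$-symmetric means that the multiset $\{\lambda_{1},\dots,\lambda_{N}\}$ is unchanged under rotation by $2\pi/k$, i.e.\ under $z\mapsto\omega z$; equivalently, there is a permutation $\sigma$ of $\{1,\dots,N\}$ with $\lambda_{\sigma(j)}=\omega\,\lambda_{j}$ for every $j$. Care is needed here only to keep track of multiplicities, so that ``$k$-symmetric'' really does furnish a bijection of the eigenvalue multiset, not merely of its underlying set.

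The second step is the one-line computation. Since $\sigma$ is a permutation,
\[
\mathrm{S}_{d}(\mathcal{H})=\sum_{j=1}^{N}\lambda_{j}^{d}=\sum_{j=1}^{N}\lambda_{\sigma(j)}^{d}=\sum_{j=1}^{N}(\omega\,\lambda_{j})^{d}=\omega^{d}\sum_{j=1}^{N}\lambda_{j}^{d}=\omega^{d}\,\mathrm{S}_{d}(\mathcal{H}),
\]
so $(1-\omega^{d})\,\mathrm{S}_{d}(\mathcal{H})=0$. Finally, because $k\nmid d$ we have $\omega^{d}\neq 1$, hence $1-\omega^{d}\neq 0$, and therefore $\mathrm{S}_{d}(\mathcal{H})=0$.

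There is no real obstacle here: the only point requiring any care is the bookkeeping in the first step (using the rotational symmetry of the spectrum as an identity of \emph{multisets}). An alternative but essentially equivalent route would be to observe that $k$-symmetry forces the characteristic polynomial of $\mathcal{A}_\mathcal{H}$ to have the form $z^{a}g(z^{k})$ for some polynomial $g$ and integer $a\geq 0$, and then to read off that every power sum $\sum_{j}\lambda_{j}^{d}$ with $k\nmid d$ vanishes; I would prefer the permutation argument above, since it is shorter and does not require discussing the precise shape of the characteristic polynomial.
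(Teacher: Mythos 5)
Your proof is correct. The paper states this lemma purely as a citation to Shao et al.\ and gives no proof of its own, and your argument --- reading $k$-symmetry as invariance of the eigenvalue multiset under $z\mapsto\omega z$ and deducing $\mathrm{S}_d=\omega^d\mathrm{S}_d$ --- is exactly the standard argument behind the cited result.
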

For $k\nmid d$, we give $d$th order spectral moment of linear bicyclic $k$-uniform  hypergraphs.

\begin{lem}\label{sdgg}
For $k\geq 3$, let $\mathcal{H}$ be a linear bicyclic $k$-uniform hypergraph. 
When $k\nmid d$, we have $\mathrm{S}_d(\mathcal{H})=0$. 
\end{lem}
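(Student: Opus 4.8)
The plan is to derive the statement from Lemma~\ref{L3121} by proving that the spectrum of a linear bicyclic $k$-uniform hypergraph $\mathcal H$ is $k$-symmetric; this reduction carries all the content. I obtain $k$-symmetry through the standard diagonal-similarity device for tensors: it suffices to produce a function $\phi\colon V(\mathcal H)\to\{0,1,\dots,k-1\}$ with $\sum_{v\in e}\phi(v)\equiv 1\pmod k$ for every edge $e\in E(\mathcal H)$. Indeed, setting $\omega=e^{2\pi\mathrm i/k}$ and $D=\mathrm{diag}\big(\omega^{\phi(v)}\big)_{v\in V(\mathcal H)}$, the entrywise transform $b_{i_1\cdots i_k}=d_{i_1}^{-(k-1)}d_{i_2}\cdots d_{i_k}\,a_{i_1\cdots i_k}$ of $\mathcal A_\mathcal H$ satisfies $b_{i_1\cdots i_k}=\omega\, a_{i_1\cdots i_k}$, since $d_{i_1}^{k}=1$ and $\prod_{v\in e}d_v=\omega$ whenever $\{i_1,\dots,i_k\}=e$; hence $\mathcal A_\mathcal H$ and $\omega\mathcal A_\mathcal H$ are similar, so the spectrum of $\mathcal A_\mathcal H$ equals $\omega$ times itself and is invariant under rotation by $2\pi/k$. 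Thus everything reduces to constructing such a $\phi$.

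The construction rests on one combinatorial fact, and this is where $k\ge3$ and bicyclicity are used: \emph{no nonempty sub-hypergraph of a linear bicyclic $k$-uniform hypergraph has minimum degree at least $2$.} I would prove it with the incidence bipartite graph $G_\mathcal H$ whose parts are $V(\mathcal H)$ and $E(\mathcal H)$, with $v$ adjacent to $e$ iff $v\in e$. Since $\mathcal H$ is connected with $m$ edges and $n=m(k-1)-1$ vertices, $G_\mathcal H$ is connected and $|E(G_\mathcal H)|-|V(G_\mathcal H)|+1=mk-(m+n)+1=2$. For a sub-hypergraph $\mathcal G$ with $m'$ edges, $n'$ vertices and $c'\ge1$ components, the corresponding subgraph of $G_\mathcal H$ has $m'(k-1)-n'+c'=m'k-(m'+n')+c'\le 2$, whereas minimum degree at least $2$ forces $2n'\le\sum_{e\in E(\mathcal G)}|e|=m'k$. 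Eliminating $n'$ gives $m'(k-2)\le 2(2-c')\le 2$, so $m'\le1$ when $k\ge4$ and $m'\le2$ when $k=3$; but a sub-hypergraph with one edge has every vertex of degree $1$, and one with two edges of a linear hypergraph has a vertex in only one of them (two edges meet in at most one vertex and $k\ge3$), each contradicting minimum degree $\ge2$. Hence $m'=0$.

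Granting this, I build $\phi$ by peeling edges. Repeatedly, among the edges not yet removed the fact above yields an edge $e$ containing a vertex, call it $v_e$, lying in no other remaining edge; remove $e$. This orders $E(\mathcal H)=\{e_1,\dots,e_m\}$ so that $v_{e_i}\notin e_1\cup\cdots\cup e_{i-1}$. Now run $i=1,\dots,m$: among the vertices of $e_i$ not yet assigned a value, set all but $v_{e_i}$ to $0$ and then put $\phi(v_{e_i}):=1-\sum_{w\in e_i,\,w\ne v_{e_i}}\phi(w)$. The vertices $v_{e_1},\dots,v_{e_m}$ are pairwise distinct, and no $v_{e_j}$ with $j>i$ lies in $e_i$, so each vertex of $\mathcal H$ is assigned exactly once and $\sum_{v\in e_i}\phi(v)\equiv1\pmod k$ for every $i$; Lemma~\ref{L3121} then finishes the proof.

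I expect the middle paragraph to be the main obstacle: getting the cycle-rank bookkeeping right — in particular the monotonicity used to pass from $G_\mathcal H$ to the subgraph of a sub-hypergraph — and disposing cleanly of the remaining cases $m'\le2$ when $k=3$. One can also bypass $k$-symmetry and argue directly from \eqref{fan111}: if some $F\in\mathcal F_d^{\epsilon}(\mathcal H)$ has $R(F)$ Eulerian, then balancing out- and in-degrees at each vertex $v$ forces $\sum_{e\ni v}n_e\equiv0\pmod k$, where $n_e$ is the multiplicity of $e$ among the edges of $F$; the same combinatorial fact forces every $n_e\equiv0\pmod k$, hence $k\mid d=\sum_e n_e$, so $\mathcal F_d^{\epsilon}(\mathcal H)=\emptyset$ and $\mathrm S_d(\mathcal H)=0$ when $k\nmid d$.
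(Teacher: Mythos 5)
Your proof is correct, but it takes a genuinely different route from the paper's. The paper also reduces the claim to Lemma \ref{L3121}, but it establishes $k$-symmetry by an exhaustive case analysis over the families $\mathcal{B}_{n}^{k}$ and $\mathcal{C}_{i,n,p,q,l}^{k}$ (with separate treatment of $k=3$, $k=4$, $k>4$), checking that each type is either $k$-partite or $hm$-bipartite and then citing the fact that $hm$-bipartite hypergraphs have $k$-symmetric spectra. You instead prove one structural fact --- no nonempty sub-hypergraph of a linear bicyclic $k$-uniform hypergraph has minimum degree at least $2$, via the cycle-rank count $m'(k-2)\le 2(2-c')$ in the incidence bipartite graph plus linearity to kill the residual cases $m'\le 2$ --- and use it to peel edges and build a weighting $\phi$ with $\sum_{v\in e}\phi(v)\equiv 1\pmod k$, giving $k$-symmetry by diagonal similarity. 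I checked the count, the peeling/assignment step (the roots $v_{e_i}$ are distinct, unassigned when needed, and each edge's sum is frozen at the step where it is processed), and the delicate case $\mathcal{C}_{2,n,1,2,1}^{3}$, which the paper must handle as $hm$-bipartite rather than $k$-partite; your argument covers it because the condition on $\phi$ is strictly weaker than $k$-partiteness. What your approach buys is uniformity: one lemma replaces pages of case-checking and makes visible exactly where bicyclicity and $k\ge 3$ enter. Your closing alternative --- arguing directly from (\ref{fan111}) that an Eulerian $F$ forces $k\mid n_e$ for every edge and hence $k\mid d$, so $\mathcal{F}_{d}^{\epsilon}(\mathcal{H})=\emptyset$ when $k\nmid d$ --- is also correct and even dispenses with Lemma \ref{L3121}. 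The only ingredient you import beyond the paper's toolkit is the standard spectrum-preserving diagonal similarity for tensors, which is on the same footing as the results the paper itself cites.
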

\begin{proof}
Let $\mathcal{H}_{1}$ and $\mathcal{H}_{2}$ be two vertex-disjoint $k$-partite $k$-uniform
hypergraphs with $u\in V(\mathcal{H}_{1}), v\in V(\mathcal{H}_{2})$. Obviously, the hypergraph
obtained by identifying $u$ with $v$ is a $k$-partite hypergraph.
Since $k$-uniform hypercycles and $k$-uniform hypertrees are $k$-partite hypergraphs, the hypergraph in $\mathcal{B}_{n}^{k}$ is a $k$-partite hypergraph.

If $\mathcal{H}\in \mathcal{C}_{1,n,p,q,l}^{k}, p=1,1<q\leq l \text{~or~} p>1,p\leq q\leq l$, then the base of $\mathcal{H}$ is a cored hypergraph, that is the base of $\mathcal{H}$ is an $hm$-bipartite hypergraph. Let $\mathcal{H}_{1}$ and $\mathcal{H}_{2}$  be vertex-disjoint $k$-partite $k$-uniform hypergraph and $hm$-bipartite $k$-uniform hypergraph, respectively, where $u\in V(\mathcal{H}_{1}), v\in V(\mathcal{H}_{2})$. Obviously, the hypergraph
obtained by identifying $u$ with $v$ is an $hm$-bipartite hypergraph. Therefore, $\mathcal{H}$ is an $hm$-bipartite hypergraph.

If $\mathcal{H}\in \mathcal{C}_{2,n,1,2,1}^{3}$, the base of $\mathcal{H}$ is not a $k$-partite hypergraph, but is an $hm$-bipartite hypergraph. Therefore, $\mathcal{H}$ is an $hm$-bipartite hypergraph.

Let $P_{p}=(u_{1},e_{1},u_{2},\ldots,e_{p},u_{p+1}), P_{q}=(v_{1}, f_{1}, v_{2},\ldots, f_{q}, v_{q+1})$ and $P_{l}=(w_{1},g_{1},\\w_{2},\ldots,g_{l},w_{l+1})$ be three $k$-uniform hyperpaths.
The hypercyle is obtained from $P_{p}$ and $P_{q}$ by identifying $u_{1}$ with $v_{1}$ and identifying $u_{p+1}$ with $v_{q+1}$, respectively. The bicyclic hypergraph is obtained from the hypercycle and $P_{l}$ by identifying $w_{1}$ with $v_{1}$, and identifying
$w_{l+1}$ with $v$, respectively, where $v\in f_{q}\setminus \{v_{q}, v_{q+1}\}$. The vertices of the hypercycle and  $P_{l}$ can be partitioned into $k$ sets so that every edge uses exactly one vertex from each set, respectively.

If $\mathcal{H}\in \mathcal{C}_{2,n,1,2,l}^{3},l>1$, then $v_{1}$ and $v$ is partitioned into same sets. If $w_{1}$ and $w_{l+1}$ is partitioned into same sets, then the base of $\mathcal{H}$ is a $k$-partite hypergraph. Therefore, $\mathcal{H}$ is a $k$-partite hypergraph. If $w_{1}$ and $w_{l+1}$ is partitioned into different sets, let $w_{1},w_{l+1}$ and the cored vertex of every non-pendant edges of $P_{l}$ be partitioned into same sets. Then the base of $\mathcal{H}$ is a $k$-partite hypergraph. Therefore, $\mathcal{H}$ is a $k$-partite hypergraph.

If $\mathcal{H}\in \mathcal{C}_{2,n,p,q,l}^{3},q>2,1\leq p\leq q-1\leq l$,  When $v_{1}$ and $v$ is partitioned into different sets and $w_{1}$ and $w_{l+1}$ is partitioned into different sets, the base of $\mathcal{H}$ is a $k$-partite hypergraph. Therefore, $\mathcal{H}$ is a $k$-partite hypergraph.
When $v_{1}$ and $v$ is partitioned into different sets and $w_{1}$ and $w_{l+1}$ is partitioned into same sets. Let $w_{l+1}$ be partitioned into the set of the other cored vertex of $g_{l}$ and the other cored vertex of $g_{l}$ be partitioned into the set of $w_{l+1}$. Then the base of $\mathcal{H}$ is a $k$-partite hypergraph. Therefore, $\mathcal{H}$ is a $k$-partite hypergraph.
When $v_{1}$ and $v$ is partitioned into same sets and $w_{1}$ and $w_{l+1}$ is partitioned into same sets, the base of $\mathcal{H}$ is a $k$-partite hypergraph. Therefore, $\mathcal{H}$ is a $k$-partite hypergraph.
When $v_{1}$ and $v$ is partitioned into same sets and $w_{1}$ and $w_{l+1}$ is partitioned into different sets. Let $w_{1},w_{l+1}$ and the cored vertex of every non-pendant edges of $P_{l}$ be partitioned into same sets. Then the base of $\mathcal{H}$ is a $k$-partite hypergraph. Therefore, $\mathcal{H}$ is a $k$-partite hypergraph.

If $\mathcal{H}\in \mathcal{C}_{2,n,p,1,l}^{3},1<p\leq l$, then $v_{1}$ and $v$ is partitioned into different sets. If $w_{1}$ and $w_{l+1}$ is partitioned into different sets, then the base of $\mathcal{H}$ is a $k$-partite hypergraph. Therefore, $\mathcal{H}$ is a $k$-partite hypergraph. If $w_{1}$ and $w_{l+1}$ is partitioned into same sets, let $w_{l+1}$ be partitioned into the set of the other cored vertex of $g_{l}$ and the other cored vertex of $g_{l}$ be partitioned into the set of $w_{l+1}$. Then the base of $\mathcal{H}$ is a $k$-partite hypergraph. Therefore, $\mathcal{H}$ is a $k$-partite hypergraph.

If $\mathcal{H}\in \mathcal{C}_{2,n,p,q,l}^{k},k>3, q=1,1<p\leq l \text{~or~} q>1, 1\leq p\leq q-1\leq l$, then the base of $\mathcal{H}$ is a cored hypergraph, that is the base of $\mathcal{H}$ is an $hm$-bipartite hypergraph. Therefore, $\mathcal{H}$ is an $hm$-bipartite hypergraph.

The hypercyle is obtained from $P_{p}$ and $P_{q}$ by identifying $u_{1}$ with $v_{1}$ and identifying $u_{p+1}$ with $v_{q+1}$, respectively. The bicyclic hypergraph is obtained from the hypercycle and $P_{l}$ by identifying $w_{1}$ with $v'$, and identifying
$w_{l+1}$ with $v''$, respectively, where $v'\in f_{1}\setminus \{v_{1}, v_{2}\}$ and $v''\in f_{q}\setminus \{v_{q}, v_{q+1}\}$. The vertices of the hypercycle and  $P_{l}$ can be partitioned into $k$ sets so that every edge uses exactly one vertex from each set, respectively.

If $\mathcal{H}\in \mathcal{C}_{3,n,1,2,1}^{3}$, then $v'$ and $v''$ is partitioned into different sets, and $w_{1}$ and $w_{l+1}$ is partitioned into different sets. Hence, the base of $\mathcal{H}$ is a $k$-partite hypergraph. Therefore, $\mathcal{H}$ is a $k$-partite hypergraph.

If $\mathcal{H}\in \mathcal{C}_{3,n,1,2,l}^{3},l>1$, then $v'$ and $v''$ is partitioned into different sets. Similar to the proof of $\mathcal{H}\in \mathcal{C}_{2,n,p,1,l}^{3},1<p\leq l$, the base of $\mathcal{H}$ is a $k$-partite hypergraph. Therefore, $\mathcal{H}$ is a $k$-partite hypergraph.

If $\mathcal{H}\in \mathcal{C}_{3,n,1,3,1}^{3}$, then $w_{1}$ and $w_{l+1}$ is partitioned into different sets. If $v'$ and $v''$ is partitioned into different sets, then the base of $\mathcal{H}$ is a $k$-partite hypergraph. Therefore, $\mathcal{H}$ is a $k$-partite hypergraph. If $v'$ and $v''$ are partitioned into same sets, then cored vertices of the hypercycle be partitioned into same sets. Let $v'''\in f_{q-1}\setminus \{v_{q-1}, v_{q}\}$. Let $v''$ and $v'''$  be partitioned into the set of $v_{q}$ and $v_{q}$ be partitioned into the set of $v''$. Then the base of $\mathcal{H}$ is a $k$-partite hypergraph. Therefore, $\mathcal{H}$ is a $k$-partite hypergraph.

If $\mathcal{H}\in \mathcal{C}_{3,n,1,3,l}^{3},l>1$, $\mathcal{C}_{3,n,p,q,l}^{3},q>3,1\leq p\leq q-2\leq l$ and $\mathcal{C}_{3,n,p,2,l}^{3},1<p\leq l$, similar to the proof of $\mathcal{H}\in \mathcal{C}_{2,n,p,q,l}^{3},q>2,1\leq p\leq q-1\leq l$, the base of $\mathcal{H}$ is a $k$-partite hypergraph. Therefore, $\mathcal{H}$ is a $k$-partite hypergraph.

If $\mathcal{H}\in \mathcal{C}_{3,n,p,1,l}^{4},1<p\leq l$, then $v'$ and $v''$ is partitioned into different sets. Similar to the proof of $\mathcal{H}\in \mathcal{C}_{2,n,p,1,l}^{3},1<p\leq l$, the base of $\mathcal{H}$ is a $k$-partite hypergraph. Therefore, $\mathcal{H}$ is a $k$-partite hypergraph.

If $\mathcal{H}\in \mathcal{C}_{3,n,p,q,l}^{4},q>2,1\leq p\leq q-2\leq l$, $\mathcal{C}_{3,n,p,2,l}^{4},1\leq p\leq l$ and $\mathcal{C}_{3,n,p,q,l}^{k},k>4, q>2, 1\leq p\leq q-2\leq l\text{~or~} q=2,1\leq p\leq l  \text {~or~}q=1, 1<p\leq l$, then the base of $\mathcal{H}$ is a cored hypergraph, that is the base of $\mathcal{H}$ is an $hm$-bipartite hypergraph. Therefore, $\mathcal{H}$ is an $hm$-bipartite hypergraph.

Obviously, $k$-partite hypergraphs are $hm$-bipartite.
Since the spectrum of $hm$-bipartite $k$-uniform hypergraph is $k$-symmetric \cite{2014cccA}, the spectrum of linear bicyclic $k$-uniform hypergraphs is $k$-symmetric.
By Lemma \ref{L3121}, when $k\nmid d$, the $d$th order spectral moment of linear bicyclic $k$-uniform  hypergraphs is equal to $0$.
\end{proof}


By Equation (\ref{fan111}), the following Lemmas give the expressions of $2k$th and $3k$th order spectral moments of linear bicyclic $k$-uniform  hypergraphs in terms of the number of sub-hypergraphs.
\begin{lem}\label{w3}
Let $\mathcal{U}$ be a linear bicyclic $k$-uniform  hypergraph with $k\geq3$. Then we have
$$
\mathrm{S}_{2k}(\mathcal{U})=k^{(k-1)}(k-1)^{|V(\mathcal{U})|-k}N_{\mathcal{U}}(P_{1}^{(k)})+2k^{2k-3}(k-1)^{|V(\mathcal{U})|-2k+1}N_{\mathcal{U}}(P_{2}^{(k)}).
$$
\end{lem}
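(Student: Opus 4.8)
The plan is to evaluate the formula (\ref{fan111}) at $d=2k$, i.e. to identify exactly which $F\in\mathcal{F}_{2k}^{\epsilon}(\mathcal{U})$ contribute and to compute $\tau(F)$ and $\prod_{v}d_v^+(F)$ for each surviving type. The starting observation is that $F\in\mathcal{F}_{2k}^{\epsilon}(\mathcal{U})$ consists of $2k$ rooted edges of $\mathcal{U}$ whose associated multi-directed graph $R(F)$ is Eulerian; since each $S_{e_i}(v_i)$ contributes $k-1$ arcs out of its root, $R(F)$ has $2k(k-1)$ arcs in total, and being Eulerian forces, at every vertex, indegree equal to outdegree. First I would argue that the sub-hypergraph of $\mathcal{U}$ swept out by the edges appearing in $F$ — counted with multiplicity — can use at most $2$ distinct edges: if it used $3$ or more distinct edges forming a connected Eulerian-supporting configuration, the vertex/arc balance together with $2k$ total rooted edges cannot be met in a linear bicyclic host (this is where the structural hypotheses ``linear'' and the bound $d=2k$ interact). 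So the only possibilities are (a) a single edge $e$ used $2k$ times, and (b) two edges $e_1,e_2$ sharing exactly one vertex (linearity!) with the two ``copies at each edge'' arranged so that $R(F)$ is a single Eulerian closed walk.

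For case (a): the relevant sub-hypergraph is a single edge, which is exactly a copy of $P_1^{(k)}$ in $\mathcal{U}$; there are $N_{\mathcal{U}}(P_1^{(k)})=m$ of them. For such $F$ one must choose the root distribution among the $k$ vertices of $e$ so that $R(F)$ — a union of directed stars all on the vertex set $e$ — is Eulerian. Here I would invoke the known single-edge computation (this is essentially the content of Lemma~\ref{L2}(3) unpacked, or a direct check): the contribution of all such $F$ to $\mathrm{S}_{2k}(\mathcal{U})$ per edge is $k^{k-1}(k-1)^{|V(\mathcal{U})|-k}$, using $|V(F)|=k$, $d_v^+(F)=(k-1)r_v(F)$, and the arborescence count of the resulting balanced multi-star. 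For case (b): the relevant sub-hypergraph is two edges meeting in one vertex, i.e. a copy of $P_2^{(k)}$, and there are $N_{\mathcal{U}}(P_2^{(k)})$ of them. Here $|V(F)|=2k-1$, each of $e_1,e_2$ is used some number of times with the shared vertex $w$ acting as the ``hinge'' of the Euler tour; I would enumerate the admissible root multiplicities, compute $\tau(F)$ via the matrix-tree theorem for directed multigraphs (the Eulerian structure makes $\tau(F)$ independent of the chosen root), and collect the $\prod_v d_v^+(F)$ denominators. The claim is that summing over all these configurations for a fixed $P_2^{(k)}$ yields the single clean coefficient $2k^{2k-3}(k-1)^{|V(\mathcal{U})|-2k+1}$; the factor $2$ reflects the two ways of assigning $(e_1,e_2)$ to the ordered pair of cycle-lengths in the Euler tour, or equivalently the two orientations of the closed walk through $w$.

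The main obstacle I anticipate is the combinatorial bookkeeping in case (b): correctly listing all root-multiplicity vectors $(r_v(F))_{v\in V(F)}$ for which $R(F)$ is Eulerian on the $2k-1$ vertices of $P_2^{(k)}$ with $2k$ total rooted edges split between the two stars, and then checking that the weighted sum $\sum_F \tau(F)/\prod_v d_v^+(F)$ telescopes to the stated monomial. This is a finite but delicate computation; I would organize it by the split $(a,2k-a)$ of how many rooted edges sit on $e_1$ versus $e_2$ and by how the roots land on $w$ versus the private vertices, reduce each $\tau(F)$ via a small explicit Laplacian determinant, and verify the total is independent of $a$ up to the symmetry that produces the coefficient $2$. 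A secondary point to be careful about is confirming the exponent of $(k-1)$: it comes from the global prefactor $(k-1)^{|V(\mathcal{U})|}$ in (\ref{fan111}) combined with $\prod_{v\in V(F)}(k-1)^{\,\text{(something)}}$, and matching $|V(\mathcal{U})|-k$ respectively $|V(\mathcal{U})|-2k+1$ against $|V(F)|=k$ respectively $2k-1$ is the cleanest sanity check that the star-counting has been done correctly. Everything else — the reduction to at most two edges, the identification with $N_{\mathcal{U}}(P_1^{(k)})$ and $N_{\mathcal{U}}(P_2^{(k)})$, and the vanishing of all other $F$ — follows from linearity and the arc-balance constraint and should be routine once set up.
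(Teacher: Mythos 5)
Your plan follows the same route as the paper's proof: evaluate (\ref{fan111}) at $d=2k$, observe that the Eulerian/arc-balance condition restricts $F$ to using either a single edge (a $P_1^{(k)}$) or two adjacent edges meeting in one vertex (a $P_2^{(k)}$), and compute $\tau(F)$ and $\prod_{v}d_v^{+}(F)$ in each of the two cases. One remark: the case-(b) bookkeeping you flag as the main obstacle collapses immediately, because any private vertex $u$ of an edge used $a$ times must satisfy $k\,r_u(F)=a$, which forces the split between the two edges to be $(k,k)$ and forces every root multiplicity (each private vertex roots its edge once, the shared vertex $w$ roots each of the two edges once); the coefficient $2$ then comes simply from the two admissible orderings of the two rooted edges $e_1(w)$ and $e_2(w)$ sharing the root $w$ in the tuple, not from a choice of orientation of the Euler tour.
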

\begin{proof}
By Equation (\ref{fan111}),  we consider $F\in\mathcal{F}_{d}^{\epsilon}(\mathcal{H})$.

Case 1. All elements of $F$ correspond to the same edge of $\mathcal{U}$.

We have $\tau(F)=2^{k-1}k^{k-2},\prod_{v\in V(F)}d_{v}^{+}(F)=(2(k-1))^{k}$. The total number of such $F$ is $N_{\mathcal{U}}(P_{1}^{(k)})$.

Case 2. All elements of $F$ correspond to the subhyperpath of length $2$ of $\mathcal{U}$.

We have $\tau(F)=(k^{k-2})^{2},\prod_{v\in V(F)}d_{v}^{+}(F)=2(k-1)^{2k-1}$. The total number of such $F$ is $2N_{\mathcal{U}}(P_{2}^{(k)})$.

Hence, we get
\begin{align*}
\mathrm{S}_{2k}(\mathcal{U})&=2k(k-1)^{|V(\mathcal{U})|}(\frac{2^{k-1}k^{k-2}}{(2(k-1))^{k}}N_{\mathcal{U}}(P_{1}^{(k)})+\frac{(k^{k-2})^{2}}{2(k-1)^{2k-1}}2N_{\mathcal{U}}(P_{2}^{(k)})\\
&=k^{(k-1)}(k-1)^{|V(\mathcal{U})|-k}N_{\mathcal{U}}(P_{1}^{(k)})+2k^{2k-3}(k-1)^{|V(\mathcal{U})|-2k+1}N_{\mathcal{U}}(P_{2}^{(k)}).
\end{align*}
\end{proof}
\begin{lem}\label{sp11}
For $k\geq3$, let $\mathcal{U}$ be a linear bicyclic $k$-uniform  hypergraph. Then we have
\begin{align*}
\mathrm{S}_{3k}(\mathcal{U})
&=(k-1)^{|V(\mathcal{U})|-k}k^{k-1}N_{\mathcal{U}}(P_{1}^{(k)})+6k^{2k-3}(k-1)^{|V(\mathcal{U})|+1-2k}N_{\mathcal{U}}(P_{2}^{(k)})\\
&+3k^{3k-5}(k-1)^{|V(\mathcal{U})|+2-3k}N_{\mathcal{U}}(P_{3}^{(k)})+6k^{3k-5}(k-1)^{|V(\mathcal{U})|+2-3k}N_{\mathcal{U}}(S_{3}^{(k)})\\
&+24k^{3k-6}(k-1)^{|V(\mathcal{U})|-3k+3}N_{\mathcal{U}}(C_{3}^{(k)}).
\end{align*}
\end{lem}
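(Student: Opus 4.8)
The plan is to apply the trace formula (\ref{fan111}) with $d=3k$ and to enumerate the set $\mathcal{F}_{3k}^{\epsilon}(\mathcal{U})$, in the same spirit as the proof of Lemma \ref{w3} but with more cases. For $F\in\mathcal{F}_{3k}^{\epsilon}(\mathcal{U})$ write $\mathcal{U}_{F}$ for the sub-hypergraph of $\mathcal{U}$ spanned by the edges occurring in $F$, and let $\mu_{e}$ be the multiplicity with which $e$ occurs, so $\sum_{e}\mu_{e}=3k$. Since $R(F)$ is Eulerian, equating at every vertex $v$ the out-degree $(k-1)r_{v}(F)$ with the in-degree yields the balance relation $k\,r_{v}(F)=\sum_{e\ni v}\mu_{e}$. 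Because $\mathcal{U}$ is linear and bicyclic, every edge of $\mathcal{U}_{F}$ has a vertex of degree one in $\mathcal{U}_{F}$; applying the balance relation there gives $k\mid\mu_{e}$, hence $\mu_{e}\geq k$, for each edge $e$ of $\mathcal{U}_{F}$. Combined with $\sum_{e}\mu_{e}=3k$ and the connectedness of $R(F)$, this shows $\mathcal{U}_{F}$ is a connected linear hypergraph with at most three edges, and enumerating these up to isomorphism leaves precisely $P_{1}^{(k)},\,P_{2}^{(k)},\,P_{3}^{(k)},\,S_{3}^{(k)}$ and $C_{3}^{(k)}$, the five sub-hypergraphs in the statement.

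For each of the five shapes I would carry out the same three steps as in Lemma \ref{w3}: first, solve the balance relations to list all admissible root-multiplicity functions $r_{v}(F)$, hence $d_{v}^{+}(F)=(k-1)r_{v}(F)$ and $\prod_{v\in V(F)}d_{v}^{+}(F)$; second, compute $\tau(F)$ by the Matrix--Tree theorem --- for $P_{1}^{(k)}, P_{2}^{(k)}, P_{3}^{(k)}$ and $S_{3}^{(k)}$ the digraph $R(F)$ decomposes at its cut vertices into complete digraphs $K_{k}$ with uniform arc multiplicities $1,2$ or $3$, whose spanning-arborescence counts are $k^{k-2}$, $2^{k-1}k^{k-2}$ or $3^{k-1}k^{k-2}$ by Cayley's formula, so $\tau(F)$ is a product of such factors; third, count the tuples $F$ lying over a fixed copy of the shape, which is the product over the vertices of the multinomial coefficients recording the orderings of equal-rooted edges within the tuple. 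Substituting into (\ref{fan111}) and summing, the contributions group into the five stated terms: for instance the single edge gives every vertex multiplicity $3$ and yields $(k-1)^{|V(\mathcal{U})|-k}k^{k-1}N_{\mathcal{U}}(P_{1}^{(k)})$, while the length-two path contributes through the two mirror configurations $(\mu_{e_{1}},\mu_{e_{2}})\in\{(k,2k),(2k,k)\}$, which is the source of the coefficient $6$.

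The main obstacle is the triangle $C_{3}^{(k)}$. Here each edge of the triangle must split exactly $2$ roots between its two shared vertices, and the balance relations admit not a single configuration but a one-parameter family indexed by $a\in\{0,1,2\}$; moreover $R(F)$ now carries the cyclic block structure of the triangle rather than a tree-of-cliques one, so $\tau(F)$ genuinely depends on $a$ --- at $k=3$ the configurations $a=1,0,2$ give $\tau=54,72,72$ --- and must be evaluated honestly via reduced-Laplacian determinants, which I would compute as explicit polynomials in $k$. Weighting these by the numbers of orderings of the tuple (which are $8,1,1$ for $a=1,0,2$ when $k=3$) and dividing by $\prod_{v}d_{v}^{+}(F)$ reproduces the coefficient $24k^{3k-6}(k-1)^{|V(\mathcal{U})|-3k+3}$; the remaining bookkeeping then parallels the proof of Lemma \ref{w3}.
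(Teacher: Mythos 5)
Your proposal follows the same route as the paper's proof: apply formula (\ref{fan111}) with $d=3k$, classify the Eulerian tuples by their underlying sub-hypergraph into the same five shapes $P_{1}^{(k)},P_{2}^{(k)},P_{3}^{(k)},S_{3}^{(k)},C_{3}^{(k)}$, and for each compute $\tau(F)$, $\prod_{v}d_{v}^{+}(F)$ and the number of tuples --- including the same split of the triangle case into the two cyclic root-configurations (with $\tau=8k^{3k-7}$ and one ordering each) and the balanced one (with $\tau=6k^{3k-7}$ and eight orderings), whose $k=3$ values $72,72,54$ you quote correctly. Your counts and coefficients agree with the paper's throughout; the only difference is that you additionally supply the divisibility argument $k\mid\mu_{e}$ explaining why no other sub-hypergraph shapes can occur, a step the paper leaves implicit.
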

\begin{proof}
By Equation (\ref{fan111}),  we consider $F\in\mathcal{F}_{d}^{\epsilon}(\mathcal{H})$.

Case 1. All elements of $F$ correspond to the same edge of $\mathcal{U}$.

We have $\tau(F)=3^{k-1}k^{k-2},\prod_{v\in V(F)}d_{v}^{+}(F)=(3(k-1))^{k}$. The total number of such $F$ is $N_{\mathcal{U}}(P_{1}^{(k)})$.

Case 2. All elements of $F$ correspond to the edge of subhyperpath of length $2$ of $\mathcal{U}$.

We have $\tau(F)=(2^{k-1}k^{k-2})k^{k-2}=2^{k-1}k^{2(k-2)},\prod_{v\in V(F)}d_{v}^{+}(F)=(2(k-1))^{k-1}3(k-1)(k-1)^{k-1}=3(k-1)^{2k-1}2^{k-1}$. The total number of such $F$ is $6N_{\mathcal{U}}(P_{2}^{(k)})$.

Case 3. All elements of $F$ correspond to the edge of subhyperpath of length $3$ of $\mathcal{U}$.

We have $\tau(F)=k^{3(k-2)},\prod_{v\in V(F)}d_{v}^{+}(F)=(2(k-1))^{2}(k-1)^{2(k-1)+k-2}=4(k-1)^{3k-2}$. The total number of such $F$ is $4N_{\mathcal{U}}(P_{3}^{(k)})$.

Case 4. All elements of $F$ correspond to the edge of subhyperstar with $3$ edges of $\mathcal{U}$.

We have $\tau(F)=k^{3(k-2)},\prod_{v\in V(F)}d_{v}^{+}(F)=3(k-1)(k-1)^{3(k-1)}=3(k-1)^{3k-2}$. The total number of such $F$ is $6N_{\mathcal{U}}(S_{3}^{(k)})$.

Case 5. All elements of $F$ correspond to the edge of subhypercycle with $3$ edges of $\mathcal{U}$.

case 5.1. We have $F=\{e_{1}(1),e_{1}(1),e_{1}(2),e_{1}(3),\ldots,e_{1}(k-1),e_{2}(k),e_{2}(k),e_{2}(k+1),e_{2}(k+2),\ldots,e_{2}(2k-2),e_{3}(2k-1),e_{3}(2k-1),e_{3}(2k),\ldots,e_{3}(3k-3)\}$ or
$\{e_{3}(1),e_{3}(1),\\e_{1}(2),e_{1}(3),\ldots,e_{1}(k-1),e_{1}(k),e_{1}(k),e_{2}(k+1),e_{2}(k+2),\ldots,e_{2}(2k-2),e_{2}(2k-1),e_{2}(2k-1),e_{3}(2k),\ldots,e_{3}(3k-3)\}$.
And $\tau(F)=8k^{3k-7}$, $\prod_{v\in V(F)}d_{v}^{+}(F)=8(k-1)^{3k-3}$.

case 5.2. We have $F=\{e_{1}(1),e_{3}(1),e_{1}(2),e_{1}(3),\ldots,e_{1}(k-1),e_{1}(k),e_{2}(k),e_{2}(k+1),e_{2}(k+2),\ldots,e_{2}(2k-2),e_{2}(2k-1),e_{3}(2k-1),e_{3}(2k),\ldots,e_{3}(3k-3)\}$.
And $\tau(F)=6k^{3k-7},\prod_{v\in V(F)}d_{v}^{+}(F)=(2(k-1))^{3}(k-1)^{3(k-2)}=8(k-1)^{3k-3}$. The total number of such $F$ is $8$.

Hence, we get
\begin{align*}
\mathrm{S}_{3k}(\mathcal{U})&=3k(k-1)^{|V(\mathcal{U})|}(\frac{3^{k-1}k^{k-2}}{(3(k-1))^{k}}N_{\mathcal{U}}(P_{1}^{(k)})+\frac{2^{k-1}k^{2(k-2)}}{3(k-1)^{2k-1}2^{k-1}}6N_{\mathcal{U}}(P_{2}^{(k)})\\
&+\frac{k^{3(k-2)}}{4(k-1)^{3k-2}}4N_{\mathcal{U}}(P_{3}^{(k)})+\frac{k^{3(k-2)}}{3(k-1)^{3k-2}}6N_{\mathcal{U}}(S_{3}^{(k)})\\
&+(2\frac{8k^{3k-7}}{8(k-1)^{3k-3}}+8\frac{6k^{3k-7}}{8(k-1)^{3k-3}})N_{\mathcal{U}}(C_{3}^{(k)}))\\
&=(k-1)^{|V(\mathcal{U})|-k}k^{k-1}N_{\mathcal{U}}(P_{1}^{(k)})+6k^{2k-3}(k-1)^{|V(\mathcal{U})|+1-2k}N_{\mathcal{U}}(P_{2}^{(k)})\\
&+3k^{3k-5}(k-1)^{|V(\mathcal{U})|+2-3k}N_{\mathcal{U}}(P_{3}^{(k)})+6k^{3k-5}(k-1)^{|V(\mathcal{U})|+2-3k}N_{\mathcal{U}}(S_{3}^{(k)})\\
&+24k^{3k-6}(k-1)^{|V(\mathcal{U})|-3k+3}N_{\mathcal{U}}(C_{3}^{(k)}).
\end{align*}
\end{proof}

The sum of the squares of the degrees of all vertices of a hypergraph $\mathcal{H}$ is called the \textit{Zagreb index} of $\mathcal{H}$, denoted by $M(\mathcal{H})$ \cite{Kau2020Energies}.
The degree of a vertex $v$ of $\mathcal{H}$ is denoted by $d_{\mathcal{H}}(v)$.

\begin{remark}
We can express $N_{\mathcal{U}}(P_{2}^{(k)})$ in Lemmas \ref{w3} and \ref{sp11} as follows:
\begin{equation}\label{zsr1}
N_{\mathcal{U}}(P_{2}^{(k)})=\sum\limits_{i\in V(\mathcal{U})}{d_{\mathcal{U}}(i)\choose2}=\frac{1}{2}\sum\limits_{i\in V(\mathcal{U})}(d_{\mathcal{U}}(i))^{2}-\frac{k|E(\mathcal{U})|}{2}=\frac{1}{2}M(\mathcal{U})-\frac{k|E(\mathcal{U})|}{2}.
\end{equation}
\end{remark}

The following Lemmas gave the hypergraphs with maximum and minimum Zagreb indices among all linear bicyclic $k$-uniform hypergraphs with $n$ vertices, $m$ edges and girth $g$.
\begin{lem}\label{dghsog}\cite{sdlfhpergnk}
For $k\geq3$ and $m\geq 2g$, when $g$ is even, $C_{1,n,\frac{g}{2},\frac{g}{2},\frac{g}{2}}^{k}(m-\frac{3g}{2})$ is the hypergraph with maximum Zagreb index among all linear bicyclic $k$-uniform hypergraphs with $n$ vertices, $m$ edges and girth $g$.
When $g$ is odd, $C_{2,n,\lfloor\frac{g}{2}\rfloor,\lceil\frac{g}{2}\rceil,\lfloor\frac{g}{2}\rfloor}^{k}(m-g-\lfloor\frac{g}{2}\rfloor)$ is the hypergraph with maximum Zagreb index among all linear bicyclic $k$-uniform hypergraphs with $n$ vertices, $m$ edges and girth $g$.

\end{lem}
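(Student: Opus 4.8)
The plan is to reduce, via edge-shifting operations, to hypergraphs of the shape ``bicyclic base plus pendant edges concentrated at one vertex'', and then to run a finite optimisation of the Zagreb index over the six base families $B_{1},B_{2},B_{3},C_{1},C_{2},C_{3}$.

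\emph{Step 1 (reduction).} Write $\Delta(\mathcal{H})$ for the maximum degree. Two elementary facts drive the reduction: $\sum_{v}d_{\mathcal{H}}(v)=k|E(\mathcal{H})|$, and relocating a pendant edge from a vertex $w$ to a vertex $v$ with $d_{\mathcal{H}}(v)\ge d_{\mathcal{H}}(w)$ changes $M$ by $2\bigl(d_{\mathcal{H}}(v)-d_{\mathcal{H}}(w)\bigr)+2>0$; more generally, appending a fixed number of pendant edges to a fixed hypergraph maximises $M$ when all of them are attached at one vertex of largest degree. Starting from any linear bicyclic $k$-uniform hypergraph with $n$ vertices, $m$ edges and girth $g$, I would peel the trees hanging off the cyclic part into pendant edges and slide them all onto a vertex of maximum degree (which lies in the cyclic part). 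Since no edge lying on a cycle is ever moved, the girth, as well as linearity, bicyclicity, $n$ and $m$, is preserved. The outcome is that the Zagreb-maximal hypergraph is obtained from one of the bases $B_{i,n',p,l,q}^{k}$ or $C_{i,n',p,q,l}^{k}$ by appending $t:=m-|E(\mathrm{base})|$ pendant edges at a vertex of maximum base-degree (this is exactly the notation $B_{1,n,p,0,q}^{k}(\cdot)$, $C_{1,n,p,q,l}^{k}(\cdot)$, $C_{2,n,p,q,l}^{k}(\cdot)$, etc.); the relation $n=m(k-1)-1$ then holds automatically.

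\emph{Step 2 (the Zagreb index of the canonical shape).} Appending $t$ pendant edges at a vertex $v^{*}$ of base-degree $d_{0}:=\Delta(\mathcal{B})$ raises $d(v^{*})$ to $d_{0}+t$ and adds $(k-1)t$ vertices of degree $1$, so
\[
M(\mathcal{H})=M(\mathcal{B})+2d_{0}t+t^{2}+(k-1)t,\qquad t=m-|E(\mathcal{B})|.
\]
Using $|V(\mathcal{B})|=|E(\mathcal{B})|(k-1)-1$ and $\sum d=k|E(\mathcal{B})|$ gives $M(\mathcal{B})=|E(\mathcal{B})|(k-1)-1+3n_{2}+8n_{3}+15n_{4}$, where $n_{j}$ counts the degree-$j$ vertices of $\mathcal{B}$; substituting this and $t=m-|E(\mathcal{B})|$ makes $(k-1)m$ the only appearance of $k$, so the whole comparison is independent of $k$ and depends only on $|E(\mathcal{B})|$, $d_{0}$ and $(n_{2},n_{3},n_{4})$.

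\emph{Step 3 (optimisation over bases).} For each family the degree profile is immediate: $B_{1}$ with $l=0$ has $d_{0}=4$, $n_{4}=1$, $n_{3}=0$; $C_{1}$ has $d_{0}=3$, $n_{3}=2$; $C_{2}$ has $d_{0}=3$, $n_{3}=1$; $C_{3}$ (when present) has $d_{0}=2$, $n_{3}=n_{4}=0$; in every family $n_{2}=|E(\mathcal{B})|+c$ for a family-dependent constant $c$, and $B_{2},B_{3}$ are never better than $B_{1}$. The girth constraint bounds $|E(\mathcal{B})|=p+q+l$ from below: the (up to three) cycle lengths sum to $2(p+q+l)$, $2(p+q+l)+1$, $2(p+q+l)+2$ for $C_{1},C_{2},C_{3}$ and each is $\ge g$, while each dumbbell has two cycles of length $\ge g$, giving $|E(\mathcal{B})|\ge\lceil 3g/2\rceil,\ \lfloor 3g/2\rfloor,\ 3g/2-1$ (even $g$) or $\lfloor 3g/2\rfloor$ (odd $g$), and $\ge 2g$ respectively. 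In the expression of Step 2 the dependence on $|E(\mathcal{B})|$ is, up to a family-dependent constant, $s^{2}+(2d_{0}-3)s$ with $s=m-|E(\mathcal{B})|$, strictly increasing in $s$ for $s\ge 0$; so I would take $|E(\mathcal{B})|$ minimal in each family and then compare the four resulting numbers. The comparisons (all $k$-free, using $m\ge 2g$ and $g\ge 3$) come out as: for even $g$, $C_{1}$ at $|E(\mathcal{B})|=3g/2$ beats $C_{2}$ at the same size (an extra degree-$3$ vertex makes $M(\mathcal{B})$ larger by $2$), beats $C_{3}$ at size $3g/2-1$ (the extra pendant edge is exactly offset by the drop in $d_{0}$ from $3$ to $2$, leaving $C_{1}$ ahead by $2$), and beats all dumbbells (its $t$ is larger by $\lceil g/2\rceil$, which dominates the gain in $d_{0}$ from $3$ to $4$ once $m\ge 2g$); for odd $g$, $C_{2}$ at $|E(\mathcal{B})|=\lfloor 3g/2\rfloor$ beats $C_{1}$ (which needs one more core edge, hence one fewer pendant, more than cancelling its larger $M(\mathcal{B})$), beats $C_{3}$ at the same size (again the drop in $d_{0}$), and beats the dumbbells. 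Finally, minimality of $p+q+l$ together with ``every cycle has length $\ge g$'' forces $p=q=l=g/2$ in the $C_{1}$ case and $(p,q,l)=(\lfloor g/2\rfloor,\lceil g/2\rceil,\lfloor g/2\rfloor)$ in the $C_{2}$ case, i.e.\ exactly $C_{1,n,\frac{g}{2},\frac{g}{2},\frac{g}{2}}^{k}(m-\frac{3g}{2})$ and $C_{2,n,\lfloor\frac{g}{2}\rfloor,\lceil\frac{g}{2}\rceil,\lfloor\frac{g}{2}\rfloor}^{k}(m-g-\lfloor\frac{g}{2}\rfloor)$.

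The hard part is Step 3: once the leading $\Omega(gm)$ terms cancel, several head-to-head comparisons (notably $C_{1}$ versus $C_{3}$ for even $g$, and $C_{2}$ versus $C_{3}$ for odd $g$) are decided only by additive constants, so one must pin down $|E(\mathcal{B})|$, $d_{0}$ and $(n_{2},n_{3},n_{4})$ exactly for each minimal-girth representative rather than reason asymptotically; the hypothesis $m\ge 2g$ is precisely what guarantees $t=m-\lfloor 3g/2\rfloor\ge\lceil g/2\rceil\ge 1$, so that these pendant configurations exist and the ``more pendants'' effect beats the ``higher base degree'' effect. A smaller but essential point in Step 1 is verifying that the edge-shifting moves never disturb an edge lying on a cycle, which is what keeps the girth locked at $g$.
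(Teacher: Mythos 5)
This lemma is stated in the paper with the citation \cite{sdlfhpergnk} and is not proved there: it is imported from another work on Zagreb indices of linear bicyclic hypergraphs. So there is no in-paper proof to measure your argument against, and your proposal has to be judged on its own. Your strategy is the natural one and, as far as I can check, it works: the pendant-edge relocation computation $2(d_{\mathcal H}(v)-d_{\mathcal H}(w))+2$ is correct and does preserve linearity, girth and bicyclicity; the identity $M(\mathcal B)=|E(\mathcal B)|(k-1)-1+3n_2+8n_3+15n_4$ together with the gain $2d_0t+t^2+(k-1)t$ correctly isolates the $k$-dependence into the common term $(k-1)m$; and the decisive head-to-head margins you assert all check out numerically (for even $g$: $C_1$ beats $C_2$ by $2$ at equal base size, beats $C_3$ at base size $3g/2-1$ by exactly $2$ after the $d_0$ drop from $3$ to $2$ cancels the extra pendant edge, and beats the $l=0$ dumbbell by $3g/2-2+(g-2)t+g^2/4>0$; for odd $g$: $C_2$ beats $C_1$ by $2t>0$ using $t\ge\lceil g/2\rceil$ from $m\ge 2g$). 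The forcing of $(p,q,l)$ by ``all cycle lengths equal to $g$'' at minimal base size is also right.

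What is still missing before this is a proof rather than a strategy is the bookkeeping you defer in Step 3. You should state explicitly the three cycle lengths in each family ($p+q,\,p+l,\,q+l$ for $C_1$; $p+q,\,l+q,\,l+p+1$ for $C_2$; $p+q,\,l+q,\,l+p+2$ for $C_3$), since these are what produce the lower bounds $|E(\mathcal B)|\ge\lceil 3g/2\rceil,\ \lceil(3g-1)/2\rceil,\ \lceil(3g-2)/2\rceil$ and hence the whole comparison. You should also verify that the minimal-size competitors are realizable within the parameter constraints of each family (e.g.\ the even-$g$ extremal $C_3$ forces $p=l=g/2-1$, $q=g/2+1$, which satisfies $1\le p\le q-2\le l$ only for $g\ge4$). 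Finally, ``$B_2,B_3$ are never better than $B_1$'' and the treatment of $B_1$ with $l>0$ (whose profile is $d_0=3$, $n_3=2$, not $d_0=4$, $n_4=1$) need their own one-line comparisons; they all lose by a $\Theta(g^2)$ margin, but that has to be written down, not asserted.
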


\begin{lem}\label{sfdgdgg}\cite{sdlfhpergnk}
For $k\geq3$,
in linear bicyclic $k$-uniform hypergraphs with $n$ vertices, $m$ edges and girth $g$, the hypergraph with maximum degree $2$ has minimum Zagreb index.
\end{lem}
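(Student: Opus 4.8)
The plan is to reduce the statement to an elementary optimisation over vertex-degree sequences, since the Zagreb index $M(\mathcal{H})=\sum_{v\in V(\mathcal{H})}d_{\mathcal{H}}(v)^{2}$ depends only on the multiset of degrees. First I would collect the two identities that constrain that multiset. Counting vertex–edge incidences edge by edge gives the handshake identity $\sum_{v}d_{\mathcal{H}}(v)=k|E(\mathcal{H})|=km$, and the definition of ``bicyclic'' gives $|V(\mathcal{H})|=n=m(k-1)-1$; subtracting, $\sum_{v}\bigl(d_{\mathcal{H}}(v)-1\bigr)=km-n=m+1$. Note that neither identity involves the girth, so the lower bound I derive next in fact holds for every linear bicyclic $k$-uniform hypergraph with $m$ edges; the girth hypothesis only serves to fix the class within which the bound is to be attained.

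Next I would expand $M(\mathcal{H})$ about $1$. From $d_{\mathcal{H}}(v)^{2}=\bigl(d_{\mathcal{H}}(v)-1\bigr)^{2}+2\bigl(d_{\mathcal{H}}(v)-1\bigr)+1$ and the two identities,
\[
M(\mathcal{H})=\sum_{v}\bigl(d_{\mathcal{H}}(v)-1\bigr)^{2}+2(m+1)+n .
\]
Every vertex of a connected hypergraph has degree at least $1$, so each $d_{\mathcal{H}}(v)-1$ is a nonnegative integer and therefore $\bigl(d_{\mathcal{H}}(v)-1\bigr)^{2}\ge d_{\mathcal{H}}(v)-1$, with equality if and only if $d_{\mathcal{H}}(v)\in\{1,2\}$. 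Summing and using $\sum_{v}(d_{\mathcal{H}}(v)-1)=m+1$ gives $M(\mathcal{H})\ge 3(m+1)+n=n+3m+3$, with equality exactly when $\Delta(\mathcal{H})\le 2$. Since $\Delta(\mathcal{H})=1$ is impossible — all degrees equal to $1$ would force the edges to be pairwise disjoint, contradicting $n=m(k-1)-1$ — equality holds if and only if $\Delta(\mathcal{H})=2$, and then $M(\mathcal{H})=n+3m+3$, a quantity depending only on $n$ and $m$.

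It then remains to check that this minimum is attained inside the prescribed class, i.e.\ that for the given girth $g$ (with $m\ge 2g$, as in Lemma~\ref{dghsog}) there is a linear bicyclic $k$-uniform hypergraph with $n$ vertices, $m$ edges, girth $g$, and maximum degree $2$. I would exhibit the base $B_{3,n,g,m-2g,g}^{k}$: two $k$-uniform hypercycles of length $g$ joined by a $k$-uniform hyperpath of length $m-2g$ attached at degree-$1$ vertices of the two cycles. Every vertex of a hypercycle or hyperpath has degree at most $2$, and the identifications defining $B_{3,\cdot}^{k}$ merge a degree-$1$ vertex with a degree-$1$ vertex, so the resulting hypergraph still has maximum degree $2$; its shortest cycle has length $g$, and by the equality analysis its Zagreb index equals $n+3m+3$, the minimum. (For $k\ge 4$ one could instead use the ``central-edge'' base $C_{3,\cdot}^{k}$ with $q=1$, which also has $\Delta=2$.) The only real work beyond the routine convexity estimate is this last step: verifying from the explicit lists of bases $B_{i,\cdot}^{k}$ and $C_{i,\cdot}^{k}$ that a girth-$g$ representative with $\Delta=2$ genuinely exists for every admissible triple $(k,m,g)$. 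I expect that (mild) case check, rather than any inequality, to be the main obstacle.
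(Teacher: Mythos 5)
Your argument is correct, and it is worth noting that the paper itself offers no proof of this lemma at all: it is imported verbatim from \cite{sdlfhpergnk}, so there is nothing in the present text to compare against line by line. Your derivation is a clean, self-contained replacement. The identities $\sum_{v}d_{\mathcal{H}}(v)=km$ and $n=m(k-1)-1$ do give $\sum_{v}(d_{\mathcal{H}}(v)-1)=m+1$, the expansion of $M(\mathcal{H})$ about $1$ is right, and the integrality bound $(d-1)^{2}\ge d-1$ with equality precisely for $d\in\{1,2\}$ yields $M(\mathcal{H})\ge n+3m+3$ with equality exactly when $\Delta(\mathcal{H})\le 2$; since no bicyclic hypergraph has all degrees equal to $1$, the equality case is $\Delta(\mathcal{H})=2$. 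This is stronger than the stated lemma in a useful way: it shows every maximum-degree-$2$ member of the class attains the \emph{same} Zagreb index $n+3m+3$, and everything else is strictly larger, which is exactly how the lemma is deployed later in the paper. The one place you should tighten the write-up is the attainment step. Your witness $B_{3,n,g,m-2g,g}^{k}$ requires $m\ge 2g$, but the lemma as stated carries no such hypothesis, and girth-$g$ linear bicyclic hypergraphs exist in the theta-type family $\mathcal{C}_{n}^{k}$ already for $m$ around $\lceil 3g/2\rceil$; there the appropriate witness is a base $C_{3,n,p,q,l}^{k}$ with $p+q+l=m$ and $p+q=g$, whose branch and attachment vertices all have degree $2$, rather than the $q=1$, $k\ge4$ case you mention parenthetically. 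You correctly flag this existence check as the only real work; spelling out that small-$m$ case would close the argument completely.
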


\section{The $S$-order in bicyclic hypergraphs}

In this section,
we give the first and last hypergraphs in an $S$-order of linear bicyclic uniform hypergraphs with given girth and number of edges. 

The following Theorem gives the last hypergraph in an $S$-order of all linear bicyclic $k$-uniform hypergraphs with $n$ vertices, $m$ edges and girth $g$. 
\begin{thm}
For $k\geq3$ and $m\geq 2g$, when $g$ is even, $C_{1,n,\frac{g}{2},\frac{g}{2},\frac{g}{2}}^{k}(m-\frac{3g}{2})$ is the last hypergraph in an $S$-order of all linear bicyclic $k$-uniform hypergraphs with $n$ vertices, $m$ edges and girth $g$.
When $g$ is odd, $C_{2,n,\lfloor\frac{g}{2}\rfloor,\lceil\frac{g}{2}\rceil,\lfloor\frac{g}{2}\rfloor}^{k}(m-g-\lfloor\frac{g}{2}\rfloor)$ is the last hypergraph in an $S$-order of all linear bicyclic $k$-uniform hypergraphs with $n$ vertices, $m$ edges and girth $g$.

\end{thm}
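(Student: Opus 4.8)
The plan is to reduce the $S$-order comparison to a comparison of spectral moments at the first order where two linear bicyclic $k$-uniform hypergraphs can differ, and then to invoke the Zagreb-index extremality result of Lemma~\ref{dghsog}. First I would record that by Lemma~\ref{L2}, any two hypergraphs in our class share $\mathrm{S}_0,\mathrm{S}_1,\ldots,\mathrm{S}_{k-1}$, and $\mathrm{S}_k$ is determined by $m$ (hence equal across the class); by Lemma~\ref{sdgg}, $\mathrm{S}_d=0$ whenever $k\nmid d$, so those orders carry no information either. Thus the first potentially distinguishing order is $d=2k$. By Lemma~\ref{w3} together with the Remark (Equation~(\ref{zsr1})),
\begin{equation*}
\mathrm{S}_{2k}(\mathcal{U})=k^{k-1}(k-1)^{|V(\mathcal{U})|-k}\,m+2k^{2k-3}(k-1)^{|V(\mathcal{U})|-2k+1}\Bigl(\tfrac12 M(\mathcal{U})-\tfrac{km}{2}\Bigr),
\end{equation*}
since $N_{\mathcal{U}}(P_1^{(k)})=m$ is fixed and $|V(\mathcal{U})|=n$ is fixed. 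Hence, among linear bicyclic $k$-uniform hypergraphs with $n$ vertices and $m$ edges, $\mathrm{S}_{2k}$ is a strictly increasing affine function of the Zagreb index $M(\mathcal{U})$.

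Next I would restrict attention to those hypergraphs of girth exactly $g$ and apply Lemma~\ref{dghsog}: the stated hypergraph $C_{1,n,\frac g2,\frac g2,\frac g2}^{k}(m-\tfrac{3g}{2})$ (for $g$ even), resp.\ $C_{2,n,\lfloor g/2\rfloor,\lceil g/2\rceil,\lfloor g/2\rfloor}^{k}(m-g-\lfloor g/2\rfloor)$ (for $g$ odd), is the \emph{unique} maximizer of $M(\cdot)$ in this family. Combined with the monotonicity above, this hypergraph strictly maximizes $\mathrm{S}_{2k}$ over the whole family. For any other hypergraph $\mathcal{H}$ in the family, either $\mathrm{S}_{2k}(\mathcal{H})<\mathrm{S}_{2k}$ of the extremal graph, in which case $\mathcal{H}\prec_s$ the extremal graph and we are done, or $\mathrm{S}_{2k}(\mathcal{H})$ equals that of the extremal graph, which forces $M(\mathcal{H})=M$ of the extremal graph, contradicting uniqueness in Lemma~\ref{dghsog}. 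So in fact $\mathcal{H}\prec_s$ the extremal graph for every other $\mathcal{H}$, establishing that it is the last hypergraph in the $S$-order.

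The subtle point, and the one I expect to require the most care, is the uniqueness claim: Lemma~\ref{dghsog} as quoted only asserts that the named hypergraph attains the maximum Zagreb index, not that it is the unique such hypergraph. I would therefore either (i) cite the uniqueness explicitly from \cite{sdlfhpergnk} if it is proved there, or (ii) argue it directly --- the Zagreb index depends only on the degree sequence, and for linear bicyclic hypergraphs of fixed $n,m,g$ the degree sequence that maximizes $\sum d(v)^2$ is obtained by concentrating degree at a single vertex as much as the girth constraint permits, which is exactly what the ``ball'' construction $C_{1}^{k}(\cdot)$ or $C_{2}^{k}(\cdot)$ does; any hypergraph realizing the same maximal degree sequence must coincide with it up to isomorphism by the structure of the bases $B_i^{k}$, $C_i^{k}$ described in Section~2. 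A secondary check is to confirm that the hypothesis $m\geq 2g$ guarantees the construction is well-defined (enough edges to form the two $g$-related cycles and attach the remaining pendant edges), which matches the hypothesis of Lemma~\ref{dghsog} verbatim, so no extra work is needed there. If, contrary to expectation, two non-isomorphic hypergraphs shared the maximal Zagreb index, one would have to go to the order $\mathrm{S}_{3k}$ (Lemma~\ref{sp11}) and compare $N_{\mathcal{U}}(P_3^{(k)})$, $N_{\mathcal{U}}(S_3^{(k)})$, $N_{\mathcal{U}}(C_3^{(k)})$; but I anticipate this branch is vacuous.
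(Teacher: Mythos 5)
Your proposal is correct and follows essentially the same route as the paper: reduce via Lemmas \ref{L2} and \ref{sdgg} to the $2k$th spectral moment, observe via Lemma \ref{w3} and Equation (\ref{zsr1}) that $\mathrm{S}_{2k}$ is monotone in the Zagreb index, and conclude by Lemma \ref{dghsog}. In fact you are more careful than the paper, which does not address the uniqueness of the Zagreb maximizer at all and simply states that the theorem ``holds immediately''; your flagging of that gap is a genuine improvement rather than a deviation.
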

\begin{proof}
By Lemmas \ref{L2} and \ref{sdgg}, we get the $d$th order spectral moments of all linear bicyclic $k$-uniform hypergraphs with $n$ vertices, $m$ edges and girth $g$ are the equal for $d=0,1,2,\ldots,2k-1$, the first significant spectral moment is the $2k$th.
For any linear bicyclic $k$-uniform hypergraph $\mathcal{H}$ with $n$ vertices, $m$ edges and girth $g$, by Lemma \ref{w3} and Equation (\ref{zsr1}),
$\mathrm{S}_{2k}(\mathcal{H})$ increases with the increase of $M(\mathcal{H})$.
By Lemma \ref{dghsog}, the theorem holds immediately.
\end{proof}

Let $Q_{t}$ be the $k$-uniform  hypertree obtained by the coalescence of $P^{(k)}_{t-1}$ at one of its non-pendant vertices of degree $1$  adjacent to the vertex of degree $2$ in a pendant edge with $P^{(k)}_{1}$ at one of its cored vertices, and
$W_{t}$ be the linear unicyclic $k$-uniform  hypergraph obtained by the coalescence of $C^{(k)}_{t-1}$ at one of its cored vertices with $P^{(k)}_{1}$ at one of its cored vertices.

Let $\mathcal{H}$ be a $k$-uniform hypergraph with $u\in V(\mathcal{H})$. A \textit{pendant path} $P$ of $\mathcal{H}$ at $u$ is itself a hyperpath with a terminal vertex $u$ such that $d_{\mathcal{H}}(u)\geq 2$ and no vertex of $V(P)\setminus\{u\}$ is adjacent to any vertex outside $P$ in $\mathcal{H}$ \cite{2023irregularity}.
Let $P$ be a pendant path at $u$ of a uniform hypergraph $\mathcal{H}$, $u\in e, e\in E(P)$, $v\in V(\mathcal{H})\setminus V(P)$.  Write $e'=(e\setminus \{u\})\bigcup\{v\}$. Let $\mathcal{H}^{'}$ be the hypergraph with $V(\mathcal{H}^{'})=V(\mathcal{H})$ and $E(\mathcal{H}^{'})=(E(\mathcal{H})\setminus\{e\})\bigcup\{e'\}$. Then $\mathcal{H}^{'}$ is obtained from $\mathcal{H}$ by moving pendant path $P$ from $u$ to $v$.

The following Theorem gives the first hypergraph in an $S$-order of all hypergraphs in $\bigcup_{i=1}^{3}\{\mathcal{B}_{i,n,g,l,q}^{k}~|~q\geq g,l\geq 0\}$.

\begin{thm}
For $k\geq3$, when $m=2g$, $B_{3,n,g,0,g}^{k}$ is the first hypergraph in an $S$-order of all  hypergraphs in $\bigcup_{i=1}^{3}\{\mathcal{B}_{i,n,g,l,q}^{k}~|~q\geq g,l\geq 0\}$.

In an $S$-order of all  hypergraphs in $\bigcup_{i=1}^{3}\{\mathcal{B}_{i,n,g,l,q}^{k}~|~q\geq g,l\geq 0\}$,
\begin{equation*}
\text{the first hypergraph is }\begin{cases}
B_{3,n,g,g-4,g}^{k},& \text{if~} m=3g-4~(g>4),\notag \\
B_{3,n,g,g-4,g+1}^{k},& \text{if~} m=3g-3~(g\geq 4),\notag \\
B_{3,n,g,t-4,t+1}^{k},& \text{if~} m=2t+g-3~(t>g, g\geq 3),\notag \\
B_{3,n,g,t-3,t+1}^{k},& \text{if~} m=2t+g-2~(t\geq g, g\geq 3).
\end{cases}
\end{equation*}

%
\end{thm}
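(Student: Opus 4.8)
The plan is to use the Zagreb‑index lemmas to reduce to the bases of maximum degree $2$, and then to separate those by their successive spectral moments $\mathrm S_{2k},\mathrm S_{3k},\mathrm S_{4k},\dots$

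Every hypergraph $\mathcal H$ in the class has $n$ vertices and $m=\tfrac{n+1}{k-1}$ edges, so Lemmas~\ref{L2} and~\ref{sdgg} give that all such $\mathcal H$ share the same $\mathrm S_i$ for $i=0,1,\dots,2k-1$; the first possibly distinguishing moment is $\mathrm S_{2k}$. By Lemma~\ref{w3} and Equation~\eqref{zsr1}, $\mathrm S_{2k}(\mathcal H)$ is a strictly increasing affine function of the Zagreb index $M(\mathcal H)$. Moreover every $\mathcal H$ in the class is a linear bicyclic $k$‑uniform hypergraph with $n$ vertices, $m$ edges and girth $g$ — its base has cycles of lengths $g$ and $q\ge g$, and the remaining $m-(g+q+l)$ edges must all attach as pendants, so they change neither the girth nor the degree pattern of the core — so by Lemma~\ref{sfdgdgg} the minimum of $M$ over the class is attained exactly by the hypergraphs of maximum degree $2$. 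Inside $\bigcup_{i=1}^3\{\mathcal B_{i,n,g,l,q}^k\}$ such a hypergraph can carry no pendant edge (that would create a vertex of degree $3$) and cannot be of type $\mathcal B_1$ or $\mathcal B_2$ (whose bases contain a vertex of degree $4$, resp.\ $3$); hence it is exactly a base $B_{3,n,g,l,q}^k$ with $g+q+l=m$, $q\ge g$, $l\ge0$. Such a base lies in the class iff $m\ge 2g$, which holds in every stated case, so the first hypergraph in the $S$‑order is one of these bases; and when $m=2g$ the constraints $g+q+l=2g$, $q\ge g$, $l\ge0$ force $q=g$, $l=0$, giving $B_{3,n,g,0,g}^k$. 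This settles the first assertion.

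For the remaining values of $m$ I would compare the bases $B_{3,n,g,l,q}^k$, which all have the same $M$ and hence the same $\mathrm S_{2k}$. Since $\mathrm S_d=0$ whenever $k\nmid d$, only the moments $\mathrm S_{jk}$ ($j\ge3$) can distinguish them. Proceeding exactly as in the proofs of Lemmas~\ref{w3} and~\ref{sp11} — applying Equation~\eqref{fan111} to $\mathcal U=B_{3,n,g,l,q}^k$ and sorting the Eulerian multidigraphs $R(F)$ by the subhypergraph of $\mathcal U$ they span — one expresses $\mathrm S_{jk}(\mathcal U)$ as a \emph{positive} linear combination of $N_{\mathcal U}(P_1^{(k)}),\dots,N_{\mathcal U}(P_j^{(k)})$ together with $N_{\mathcal U}(C_i^{(k)})$ for $3\le i\le j$; no stars or other branched subhypergraphs occur because $\mathcal U$ has maximum degree $2$. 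Here $N_{\mathcal U}(P_1^{(k)})=m$ and $N_{\mathcal U}(P_2^{(k)})=m+1$ are constant over the family, and the remaining counts are read off from the intersection graph $\Gamma$ of $\mathcal U$ — the graph obtained from a $g$‑cycle and a $q$‑cycle by joining a chosen vertex of each by a path with $l+1$ edges — via: $N_{\mathcal U}(P_j^{(k)})$ is the number of induced $j$‑vertex paths of $\Gamma$, and $N_{\mathcal U}(C_i^{(k)})$ is the number of $i$‑cycles of $\Gamma$. One then evaluates all of these as explicit functions of $(g,l,q)$.

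The technical heart, and the main obstacle, is the lexicographic comparison that results. For $j$ below the girth $g$ the count $N_{\mathcal U}(P_j^{(k)})$ is minimized by making the connecting path $l$ as long as $q\ge g$ permits, so the moments $\mathrm S_{3k},\dots,\mathrm S_{(g-1)k}$ push $l$ upward; when $m\le 3g-4$ this already forces $q=g$ and the unique base $B_{3,n,g,m-2g,g}^k$, which for $m=3g-4$ is $B_{3,n,g,g-4,g}^k$. For the larger values of $m$ the second cycle $C_q$, via the count $N_{\mathcal U}(C_g^{(k)})=1+[q=g]$ (strictly smaller when $q>g$), enters at $\mathrm S_{gk}$ and later, and balancing the path‑counts against the cycle‑counts singles out the base with $q-l$ equal to $4$ when $m-g$ is even and $5$ when $m-g$ is odd, $l$ and $q$ otherwise as equal as possible: this is $B_{3,n,g,t-4,t+1}^k$ for $m=2t+g-3$ (the boundary $t=g$ giving $B_{3,n,g,g-4,g+1}^k$ for $m=3g-3$) and $B_{3,n,g,t-3,t+1}^k$ for $m=2t+g-2$. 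Turning this into a proof requires (i) the positive‑combination formula for $\mathrm S_{jk}$ over the needed range of $j$, (ii) the exact values of $N_{\mathcal U}(P_j^{(k)})$ and $N_{\mathcal U}(C_i^{(k)})$ for each member of the family, and (iii) a check that no earlier moment separates the candidates against the claimed minimizer. A possibly cleaner route is to isolate a few edge‑moving operations on the bases — lengthening or shortening the connecting path, and transferring edges between the two cycles — and to prove directly that each strictly lowers or raises the $S$‑order, so that iterating the $S$‑lowering moves (in the spirit of moving a pendant path, defined above) terminates precisely at the claimed hypergraph.
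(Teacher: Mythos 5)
Your reduction to the bases is broken at a key step. You claim that a maximum-degree-$2$ member of $\bigcup_{i=1}^3\{\mathcal B_{i,n,g,l,q}^k\}$ ``can carry no pendant edge (that would create a vertex of degree $3$)'' and hence must be exactly a base with $g+q+l=m$. This is false: a pendant edge (or pendant path) attached at a \emph{cored} vertex of the base raises that vertex's degree only from $1$ to $2$, so the class of Zagreb-minimizers consists of all hypergraphs obtained from some $B_{3,n',g,l,q}^k$ by attaching disjoint pendant paths at distinct degree-one vertices --- a much larger set than the bases, and one in which $g+q+l$ can be strictly less than $m$. Lemmas \ref{L2}, \ref{sdgg}, \ref{w3} and \ref{sfdgdgg} only get you into this larger set; eliminating the non-base competitors is the core of the paper's argument and cannot be skipped. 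The paper does it by showing all these hypergraphs agree up to $\mathrm S_{3k-1}$, then using Lemma \ref{sp11} (where $N_{\mathcal H}(P_3^{(k)})$ is the only varying term for fixed $g,q$), the pendant-path-moving operation to normalize an arbitrary max-degree-$2$ competitor into a base-plus-one-pendant-path $\mathcal H_2$, and explicit counts showing $N_{B_{3,n,g,m-g-q,q}^k}(P_3^{(k)})<N_{\mathcal H_2}(P_3^{(k)})$. (Only in the special case $m=2g$ does your degree argument suffice, since then $q\ge g$ and $q+l\le g$ force the member to equal $B_{3,n,g,0,g}^k$.)

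Beyond that, the second half of your proposal is a plan rather than a proof: you yourself list the three things still needed (the positive-combination formula for $\mathrm S_{jk}$, the exact path/cycle counts, and the check that no earlier moment reverses the comparison). The paper carries these out concretely --- computing $N(P_t^{(k)})$ and $N(C_t^{(k)})$ case by case for $t<g$, $t=g$, $t>g$, and evaluating the signed differences such as $\mathrm S_{tk}(B_{3,n,g,t-3,t}^k)-\mathrm S_{tk}(B_{3,n,g,t-4,t+1}^k)=tk^{tk-2t}(k-1)^{n-tk+t-1}(k-2)(t+1)>0$ --- and your qualitative description of the outcome ($q-l=4$ or $5$ according to the parity of $m-g$) matches, but none of the quantitative verification is present. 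As written, the proposal establishes only the $m=2g$ case.
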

\begin{proof}
By Lemma \ref{sfdgdgg}, we get the first hypergraph in an $S$-order of all  hypergraphs in $\bigcup_{i=1}^{3}\{\mathcal{B}_{i,n,g,l,q}^{k}~|~q\geq g,l\geq 0\}$ is in $\{\mathcal{B}_{3,n,g,l,q}^{k}~|~q\geq g,l\geq 0\}$ and its maximum degree is $2$.
Obviously, when $m=2g$, $B_{3,n,g,0,g}^{k}$ is the first hypergraph in an $S$-order of all  hypergraphs in $\bigcup_{i=1}^{3}\{\mathcal{B}_{i,n,g,l,q}^{k}~|~q\geq g,l\geq 0\}$.
Since the $d$th order spectral moments of all linear bicyclic $k$-uniform hypergraphs with maximum degree $2$ in $\{\mathcal{B}_{3,n,g,l,q}^{k}~|~q\geq g,l\geq 0\}$   are the equal for $d=0,1,2,\ldots,3k-1$, the first significant spectral moment is the $3k$th.
For any linear bicyclic $k$-uniform hypergraph $\mathcal{H}$ with maximum degree $2$ in $\mathcal{B}_{3,n,g,l,q}^{k}$, by Lemma \ref{sp11}, for fixed $g$ and $q$,
$\mathrm{S}_{3k}(\mathcal{H})$ increases with the increase of $N_{\mathcal{H}}(P_{3}^{(k)})$.

For $m-g-l-q>0$, let $\mathcal{H}_{1}$ be a hypergraph  with maximum degree 2 in $\mathcal{B}_{3,n,g,l,q}^{k}$. We move a pendant path of $\mathcal{H}_{1}$ from its terminal vertex to a pendant vertex which is not in the above pendant path. Repeating the above transformation, $\mathcal{H}_{1}$ can be changed into the $k$-uniform bicyclic hypergraph $\mathcal{H}_{2}$
obtained from $B_{3,n',g,l,q}^{k}$ by attaching a pendant path of length $m-g-l-q$ at a vertex $u$ with
degree 1, where $n'= (p+q+l)(k-1)-1$. And $N_{\mathcal{H}_{2}}(P_{3}^{(k)})\leq N_{\mathcal{H}_{1}}(P_{3}^{(k)})$.
Let $u\in e, e\in E(\mathcal{H}_{2})$. When the number of vertices of $e$ whose degree is equal to $2$ is $4$.
If $g=3$ and $q=3$, we have $N_{\mathcal{H}_{2}}(P_{3}^{(k)})=l+4+m-g-l-q+2=m$.
If $g=3$ and $q>3$, we have $N_{\mathcal{H}_{2}}(P_{3}^{(k)})=l+4+q+m-g-l-q+2=m+3$.
If $g>3$, we have $N_{\mathcal{H}_{2}}(P_{3}^{(k)})=g+l+4+q+m-g-l-q+2=m+6$.
 When the number of vertices of $e$ whose degree is equal to $2$ is $3$.
If $g=3$ and $q=3$, we have $N_{\mathcal{H}_{2}}(P_{3}^{(k)})=l+4+m-g-l-q+1=m-1$.
If $g=3$ and $q>3$, we have $N_{\mathcal{H}_{2}}(P_{3}^{(k)})=l+4+q+m-g-l-q+1=m+2$.
If $g>3$, we have $N_{\mathcal{H}_{2}}(P_{3}^{(k)})=g+l+4+q+m-g-l-q+1=m+5$.

For $m-g-l-q=0$,
when $g=3$. If $q=3$, we have $N_{B_{3,n,3,m-6,3}^{k}}(P_{3}^{(k)})=m-3-3+4=m-2$ and $N_{B_{3,n,3,m-6,3}^{k}}(C_{3}^{(k)})=2$.
If $q>3$, we have $N_{B_{3,n,3,m-3-q,q}^{k}}(P_{3}^{(k)})=m-3-q+4+q=m+1$ and $N_{B_{3,n,3,m-3-q,q}^{k}}(C_{3}^{(k)})=1$.
When $g>3$, we have $N_{B_{3,n,g,m-g-q,q}^{k}}(P_{3}^{(k)})=g+m-g-q+4+q=m+4$.

Therefore, when $g=3$ and $q\geq 3$, $N_{B_{3,n,3,m-3-q,q}^{k}}(P_{3}^{(k)})<N_{\mathcal{H}_{2}}(P_{3}^{(k)})$, that is $\mathrm{S}_{3k}(B_{3,n,3,m-3-q,q}^{k})<\mathrm{S}_{3k}(\mathcal{H}_{2})$.
For $q>3$, $\mathrm{S}_{3k}(B_{3,n,3,m-6,3}^{k})-\mathrm{S}_{3k}(B_{3,n,3,m-3-q,q}^{k})=3k^{3k-6}(k-1)^{n-3k+2}(8(k-1)-3k)>0$, that is in an $S$-order of all  hypergraphs in $\bigcup_{i=1}^{3}\{\mathcal{B}_{i,n,3,l,q}^{k}~|~l\geq0,q\geq 3\}$, the first hypergraph is in $\{B_{3,n,3,m-3-q,q}^{k}~|~q>3,m-3-q\geq 0\}$. And the $d$th order spectral moments of all hypergraphs in $\{B_{3,n,3,m-3-q,q}^{k}~|~q>3,m-3-q\geq0\}$ are the equal for each $d\in \{0,1,2,\ldots,4k-1\}$.
Obviously, when $m=7$, $B_{3,7k-8,3,0,4}^{k}$ is the first hypergraph in an $S$-order of all  hypergraphs in $\bigcup_{i=1}^{3}\{\mathcal{B}_{i,7k-8,3,l,q}^{k}~|~l\geq0,q\geq 3\}$.
When $g>3$, we have $N_{B_{3,n,g,m-g-q,q}^{k}}(P_{3}^{(k)})<N_{\mathcal{H}_{2}}(P_{3}^{(k)})$, that is $\mathrm{S}_{3k}(B_{3,n,g,m-g-q,q}^{k})<\mathrm{S}_{3k}(\mathcal{H}_{2})$. For $g>3$, in an $S$-order of all  hypergraphs in $\bigcup_{i=1}^{3}\{\mathcal{B}_{i,n,g,l,q}^{k}~|~q\geq g,l\geq 0\}$, the first hypergraph is in $\{B_{3,n,g,m-g-q,q}^{k}~|~q\geq g,m-g-q\geq 0\}$. And the $d$th order spectral moments of all hypergraphs in $\{B_{3,n,g,m-g-q,q}^{k}~|~q\geq g,m-g-q\geq 0\}$ are the equal for each $d\in \{0,1,2,\ldots,4k-1\}$.


When $g\geq 5$, suppose $\mathcal{H}\in \{B_{3,n,g,m-g-q,q}^{k}~|~q\geq g,m-g-q\geq 0\}$, if $m-g-q=0$, we have
$N_{\mathcal{H}}(P_{4}^{(k)})=g+8+q=m+8$, $N_{\mathcal{H}}(W_{4})=0$ and $N_{\mathcal{H}}(Q_{4})=2$. If $m-g-q>0$, we have $N_{\mathcal{H}}(P_{4}^{(k)})=g+l+7+q=m+7$, $N_{\mathcal{H}}(W_{4})=0$ and $N_{\mathcal{H}}(Q_{4})=2$.
When $m>2g$, by Equation (\ref{fan111}), we get the $4k$th order spectral moment of all hypergraphs in $\{B_{3,n,g,m-g-q,q}^{k}~|~q\geq g, m>g+q\}$ is the equal. And for $m-g-q>0$, $\mathrm{S}_{4k}(B_{3,n,g,m-g-q,q}^{k})<\mathrm{S}_{4k}(B_{3,n,g,0,m-g}^{k})$.
Therefore, when $g\geq 5$, in an $S$-order of all  hypergraphs in $\bigcup_{i=1}^{3}\{\mathcal{B}_{i,n,g,l,q}^{k}~|~q\geq g,l\geq 0\}$, the first hypergraph is in
$\{B_{3,n,g,m-g-q,q}^{k}~|~q\geq g, m>g+q\}$.

Let $t\geq 5$. If $t<g$,
for the $tk$th order spectral moment of hypergraphs in $\{B_{3,n,g,m-g-q,q}^{k}~|~q\geq g,m-g-q\geq t-4\}$, since the number of the subhypergraphs with $t$ edges except $P_{t}^{(k)}$ of all hypergraphs  in $\{B_{3,n,g,m-g-q,q}^{k}~|~q\geq g,m-g-q\geq t-4\}$ is equal, we only need consider the number of $P_{t}^{(k)}$.

Suppose $\mathcal{H}\in \{B_{3,n,g,m-g-q,q}^{k}~|~q\geq g,m-g-q\geq t-4\}$,\\
if $m-g-q=t-4$ and $q\geq g$, then $N_{\mathcal{H}}(P_{t}^{(k)})=g+q+2(t-4+3)+2(t-1-2)=m+3t-4$.\\
If $m-g-q>t-4$ and $q\geq g$, then $N_{\mathcal{H}}(P_{t}^{(k)})=g+q+2(t-2)+m-g-q-t+2+t-1+t-2=m+3t-5$.

Therefore, when $t<g$, $\{B_{3,n,g,m-g-q,q}^{k}~|~ q\geq g, m-g-q>t-4\}$ have minimum $tk$th order spectral moment in  $\{B_{3,n,g,m-g-q,q}^{k}~|~q\geq g,m-g-q\geq t-4\}$.

Let $g\geq 4$.
If $t=g$,
for the $gk$th order spectral moment of hypergraphs in $\{B_{3,n,g,m-g-q,q}^{k}~|~q\geq g,m-g-q\geq g-4\}$, since the number of the subhypergraphs with $g$ edges except $P_{g}^{(k)}$ and $C_{g}^{(k)}$
of all hypergraphs  in $\{B_{3,n,g,m-g-q,q}^{k}~|~q\geq g,m-g-q\geq g-4\}$
is equal, we only need consider the number of $P_{g}^{(k)}$ and $C_{g}^{(k)}$.

Suppose $\mathcal{H}\in \{B_{3,n,g,m-g-q,q}^{k}~|~q\geq g,m-g-q\geq g-4\}$,\\
if $m-g-q=g-4$ and $q=g$, then $N_{\mathcal{H}}(P_{g}^{(k)})=2(g-3)+2(3+g-4)=4g-8$, $N_{\mathcal{H}}(C_{g}^{(k)})=2$ and $m=3g-4$.\\
If $m-g-q=g-4$ and $q>g$, then $N_{\mathcal{H}}(P_{g}^{(k)})=2(g-3)+2(3+g-4)+q=4g-8+q=m+2g-4$, $N_{\mathcal{H}}(C_{g}^{(k)})=1$ and $m>3g-4$.\\
If $m-g-q>g-4$ and $q=g$, then $N_{\mathcal{H}}(P_{g}^{(k)})=2(g-2)+m-g-q-g+2+g-1+g-2=2g-5+m-q=m+g-5$, $N_{\mathcal{H}}(C_{g}^{(k)})=2$ and $m>3g-4$.\\
If $m-g-q>g-4$ and $q>g$, then $N_{\mathcal{H}}(P_{g}^{(k)})=2(g-2)+m-g-q-g+2+g-1+g-2+q=2g-5+m$, $N_{\mathcal{H}}(C_{g}^{(k)})=1$ and $m>3g-3$.


When $m=3g-3$, we consider the $gk$th order spectral moments of $B_{3,n,g,g-4,g+1}^{k}$ and $B_{3,n,g,g-3,g}^{k}$.
\begin{align*}
&\mathrm{S}_{gk}(B_{3,n,g,g-3,g}^{k})-\mathrm{S}_{gk}(B_{3,n,g,g-4,g+1}^{k})\\
&=gk(k-1)^{n}(2^{g-1}\frac{k^{gk-2g}}{2^{g-1}(k-1)^{gk-g+1}}(N_{B_{3,n,g,g-3,g}^{k}}(P_{g}^{(k)})-N_{B_{3,n,g,g-4,g+1}^{k}}(P_{g}^{(k)}))\\
&+(2^{g}\frac{2gk^{gk-2g-1}}{2^{g}(k-1)^{gk-g}}+2\frac{2^{g}k^{kg-2g-1}}{2^{g}(k-1)^{gk-g}})(N_{B_{3,n,g,g-3,g}^{k}}(C_{g}^{(k)})-N_{B_{3,n,g,g-4,g+1}^{k}}(C_{g}^{(k)})))\\
&=gk^{gk-2g}(k-1)^{n-gk+g-1}(k-2)(g+1)>0.
\end{align*}
Therefore, when $m=3g-3$, $B_{3,n,g,g-4,g+1}^{k}$  have minimum $gk$th order spectral moment in $\{B_{3,n,g,m-g-q,q}^{k}~|~q\geq g,m-g-q\geq g-4\}$.

When $m>3g-3$, we consider the $gk$th order spectral moments of $B_{3,n,g,g-4,q}^{k}$, $B_{3,n,g,m-2g,g}^{k}$ and $\{B_{3,n,g,m-g-q,q}^{k}~|~q>g, m-g-q>g-4\}$. By Equation (\ref{fan111}), we get for $m-g-q>g-4$ and $q>g$, $\mathrm{S}_{gk}(B_{3,n,g,m-g-q,q}^{k})<\mathrm{S}_{gk}(B_{3,n,g,g-4,q}^{k})$. And we have
\begin{align*}
&\mathrm{S}_{gk}(B_{3,n,g,m-2g,g}^{k})-\mathrm{S}_{gk}(B_{3,n,g,m-g-q,q}^{k})\\
&=gk(k-1)^{n}(2^{g-1}\frac{k^{gk-2g}}{2^{g-1}(k-1)^{gk-g+1}}(N_{B_{3,n,g,m-2g,g}^{k}}(P_{g}^{(k)})-N_{B_{3,n,g,m-g-q,q}^{k}}(P_{g}^{(k)}))\\
&+(2^{g}\frac{2gk^{gk-2g-1}}{2^{g}(k-1)^{gk-g}}+2\frac{2^{g}k^{kg-2g-1}}{2^{g}(k-1)^{gk-g}})(N_{B_{3,n,g,m-2g,g}^{k}}(C_{g}^{(k)})-N_{B_{3,n,g,m-g-q,q}^{k}}(C_{g}^{(k)})))\\
&=gk^{gk-2g}(k-1)^{n-gk+g-1}(k(g+2)-2(g+1))>0.
\end{align*}
Therefore, when $m>3g-3$, $\{B_{3,n,g,m-g-q,q}^{k}~|~q>g, m-g-q>g-4\}$  have minimum $gk$th order spectral moment in $\{B_{3,n,g,m-g-q,q}^{k}~|~q\geq g,m-g-q\geq g-4\}$.

Hence, when $m=3g-4$, $B_{3,n,g,g-4,g}^{k}$ is the first hypergraph in an $S$-order of all  hypergraphs in $\bigcup_{i=1}^{3}\{\mathcal{B}_{i,n,g,l,q}^{k}~|~q\geq g,l\geq 0\}$.
When $m=3g-3$, $B_{3,n,g,g-4,g+1}^{k}$ is the first hypergraph in an $S$-order of all  hypergraphs in $\bigcup_{i=1}^{3}\{\mathcal{B}_{i,n,g,l,q}^{k}~|~q\geq g,l\geq 0\}$.
When $m=3g-2$, $B_{3,n,g,g-3,g+1}^{k}$ is the first hypergraph in an $S$-order of all  hypergraphs in $\bigcup_{i=1}^{3}\{\mathcal{B}_{i,n,g,l,q}^{k}~|~q\geq g,l\geq 0\}$.

Let $g\geq 3$. If $t>g$,
for the $tk$th order spectral moment of hypergraphs in $\{B_{3,n,g,m-g-q,q}^{k}~|~q\geq t,m-g-q\geq t-4\}$, since the number of the subhypergraphs with $t$ edges except $P_{t}^{(k)}$ and $C_{t}^{(k)}$ of all hypergraphs  in $\{B_{3,n,g,m-g-q,q}^{k}~|~q\geq t,m-g-q\geq t-4\}$ is equal, we only need consider the number of $P_{t}^{(k)}$ and $C_{t}^{(k)}$.

Suppose $\mathcal{H}\in \{B_{3,n,g,m-g-q,q}^{k}~|~q\geq t,m-g-q\geq t-4\}$,\\
if $m-g-q=t-4$ and $q=t$, then $N_{\mathcal{H}}(P_{t}^{(k)})=2(g-3)+2(3+t-4)=2g+2t-8$, $N_{\mathcal{H}}(C_{t}^{(k)})=1$ and $m=2t+g-4$.\\
If $m-g-q=t-4$ and $q>t$, then $N_{\mathcal{H}}(P_{t}^{(k)})=2(g-3)+2(3+t-4)+q=2g+2t-8+q=m+g+t-4$, $N_{\mathcal{H}}(C_{t}^{(k)})=0$ and $m>2t+g-4$.\\
If $m-g-q>t-4$ and $q=t$, then $N_{\mathcal{H}}(P_{t}^{(k)})=2(t-2)+m-g-q-t+2+g-1+g-2=t+m-q+g-5=m+g-5$, $N_{\mathcal{H}}(C_{t}^{(k)})=1$ and $m>2t+g-4$.\\
If $m-g-q>t-4$ and $q>t$, then $N_{\mathcal{H}}(P_{t}^{(k)})=2(t-2)+m-g-q-t+2+g-1+g-2+q=t+m+g-5$, $N_{\mathcal{H}}(C_{t}^{(k)})=0$ and $m>2t+g-3$.


When $m=2t+g-3$, we consider the $tk$th order spectral moments of $B_{3,n,g,t-4,t+1}^{k}$ and $B_{3,n,g,t-3,t}^{k}$.
\begin{align*}
&\mathrm{S}_{tk}(B_{3,n,g,t-3,t}^{k})-\mathrm{S}_{tk}(B_{3,n,g,t-4,t+1}^{k})\\
&=tk(k-1)^{n}(2^{t-1}\frac{k^{tk-2t}}{2^{t-1}(k-1)^{tk-t+1}}(N_{B_{3,n,g,t-3,t}^{k}}(P_{t}^{(k)})-N_{B_{3,n,g,t-4,t+1}^{k}}(P_{t}^{(k)}))\\
&+(2^{t}\frac{2tk^{tk-2t-1}}{2^{t}(k-1)^{tk-t}}+2\frac{2^{t}k^{kt-2t-1}}{2^{t}(k-1)^{tk-t}})(N_{B_{3,n,g,t-3,t}^{k}}(C_{t}^{(k)})-N_{B_{3,n,g,t-4,t+1}^{k}}(C_{t}^{(k)})))\\
&=tk^{tk-2t}(k-1)^{n-tk+t-1}(k-2)(t+1)>0.
\end{align*}
Therefore, when $m=2t+g-3$, $B_{3,n,g,t-4,t+1}^{k}$  have minimum $tk$th order spectral moment in $\{B_{3,n,g,m-g-q,q}^{k}~|~q\geq t,m-g-q\geq t-4\}$.

When $m>2t+g-3$, we consider the $tk$th order spectral moments of $B_{3,n,g,m-g-t,t}^{k}$ and $\mathcal{H}\in \{B_{3,n,g,m-g-q,q}^{k}~|~q>t,m-g-q>t-4\}$.
We have
\begin{align*}
&\mathrm{S}_{tk}(B_{3,n,g,m-g-t,t}^{k})-\mathrm{S}_{tk}(\mathcal{H})\\
&=tk(k-1)^{n}(2^{t-1}\frac{k^{tk-2t}}{2^{t-1}(k-1)^{tk-t+1}}(N_{B_{3,n,g,m-g-t,t}^{k}}(P_{t}^{(k)})-N_{\mathcal{H}}(P_{t}^{(k)}))\\
&+(2^{t}\frac{2tk^{tk-2t-1}}{2^{t}(k-1)^{tk-t}}+2\frac{2^{t}k^{kt-2t-1}}{2^{t}(k-1)^{tk-t}})(N_{B_{3,n,g,m-g-t,t}^{k}}(C_{t}^{(k)})-N_{\mathcal{H}}(C_{t}^{(k)})))\\
&=tk^{tk-2t}(k-1)^{n-tk+t-1}(k(t+2)-2(t+1))>0.
\end{align*}
Therefore, when $m>2t+g-3$, $\{B_{3,n,g,m-g-q,q}^{k}~|~q>t,m-g-q>t-4\}$  have minimum $tk$th order spectral moment in $\{B_{3,n,g,m-g-q,q}^{k}~|~q\geq t,m-g-q\geq t-4\}$.

Hence, when $m=3g-2$, $B_{3,n,g,g-3,g+1}^{k}$ is the first hypergraph in an $S$-order of all  hypergraphs in $\bigcup_{i=1}^{3}\{\mathcal{B}_{i,n,g,l,q}^{k}~|~q\geq g,l\geq 0\}$.
For $t>g$, when $m=2t+g-3$, $B_{3,n,g,t-4,t+1}^{k}$ is the first hypergraph in an $S$-order of all  hypergraphs in $\bigcup_{i=1}^{3}\{\mathcal{B}_{i,n,g,l,q}^{k}~|~q\geq g,l\geq 0\}$.
When $m=2t+g-2$, $B_{3,n,g,t-3,t+1}^{k}$ is the first hypergraph in an $S$-order of all  hypergraphs in $\bigcup_{i=1}^{3}\{\mathcal{B}_{i,n,g,l,q}^{k}~|~q\geq g,l\geq 0\}$.

\end{proof}

The following Theorem gives the first hypergraph in an $S$-order of all  hypergraphs with girth $g$ in $\mathcal{C}_{n}^{k}$.

\begin{thm}
Let $k\geq3$. When $m\geq 4$, $C_{3,n,1,2,m-3}^{k}$ is the first hypergraph in an $S$-order of all hypergraphs with girth $3$ in $\mathcal{C}_{n}^{k}$.

%

For $g>3$, when $g$ is even and $m\geq \frac{3g}{2}$, $C_{3,n,g-\frac{g+2}{2},\frac{g+2}{2},m-g}^{k}$ is the first hypergraph in an $S$-order of all hypergraphs with girth $g$ in $\mathcal{C}_{n}^{k}$.
When $g$ is odd and $m\geq \frac{3g}{2}-\frac{1}{2}$, $C_{3,n,g-\lceil\frac{g+2}{2}\rceil,\lceil\frac{g+2}{2}\rceil,m-g}^{k}$ is the first hypergraph in an $S$-order of all hypergraphs with girth $g$ in $\mathcal{C}_{n}^{k}$.
\end{thm}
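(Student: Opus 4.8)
The plan is to follow the scheme of the two preceding theorems. By Lemmas \ref{L2} and \ref{sdgg}, all linear bicyclic $k$-uniform hypergraphs with $n$ vertices, $m$ edges and girth $g$ share the same $\mathrm{S}_{d}$ for $d=0,1,\dots,2k-1$, so the first possibly significant moment is $\mathrm{S}_{2k}$, which by Lemma \ref{w3} and Equation (\ref{zsr1}) is a strictly increasing function of the Zagreb index. By Lemma \ref{sfdgdgg} this index, over linear bicyclic hypergraphs with the given $n,m,g$, is minimized exactly by those of maximum degree $2$; since any $\mathcal{C}_{1}$- or $\mathcal{C}_{2}$-type base has a vertex of degree at least $3$, inside $\mathcal{C}_{n}^{k}$ the maximum-degree-$2$ hypergraphs are exactly the $\mathcal{C}_{3}$-type ``theta'' hypergraphs, i.e. a base $C_{3,n,p,q,l}^{k}$ with possibly some pendant hyperpaths attached. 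Passing to the next moment $\mathrm{S}_{3k}$ (all lower ones being tied by Lemmas \ref{L2}, \ref{sdgg} and the above), Lemma \ref{sp11} and a direct count show that a pendant hyperpath turns a path-edge of the base into a third branch edge and thereby strictly increases $N_{\mathcal{H}}(P_{3}^{(k)})$; hence the first hypergraph is a pure base $C_{3,n,p,q,l}^{k}$ with $p+q+l=m$. I will describe such a base by the multiset $\{a,b,c\}$, $a\le b\le c$, of the lengths of its three branch-to-branch hyperpaths: $\{a,b,c\}=\{p+1,q-1,l+1\}$, $a+b+c=m+1$, the girth equals $a+b$, so $c=m+1-g$ is forced and the only remaining freedom is the splitting $a+b=g$ with $a\le b\le c$.

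If $g=3$ this forces $(a,b)=(1,2)$, i.e. $(p,q,l)=(1,2,m-3)$ (valid precisely when $m\ge 4$), so the candidate set reduces to the single hypergraph $C_{3,n,1,2,m-3}^{k}$, which is then the first one. Assume from now on $g\ge 4$. Every candidate has $N_{\mathcal{H}}(P_{1}^{(k)})=m$, the same $N_{\mathcal{H}}(P_{2}^{(k)})$ (equal Zagreb indices), $N_{\mathcal{H}}(S_{3}^{(k)})=0$, $N_{\mathcal{H}}(C_{3}^{(k)})=0$, and $N_{\mathcal{H}}(P_{3}^{(k)})=m+4$, so by Lemma \ref{sp11} the candidates all agree in $\mathrm{S}_{d}$ for $d\le 3k-1$ and in $\mathrm{S}_{3k}$, and one must go to higher orders $\mathrm{S}_{tk}$, $t\ge 4$.

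The core of the proof is a pairwise comparison. Given a candidate $\mathcal{H}$ with path lengths $(a,b,c)$, $a<b$, let $\mathcal{H}'$ be the candidate with path lengths $(a-1,b+1,c)$, obtained from $\mathcal{H}$ by moving one internal edge from the shorter to the longer of the two girth paths; the hypotheses $m\ge\tfrac{3g}{2}$ ($g$ even) and $m\ge\tfrac{3g}{2}-\tfrac12$ ($g$ odd) are exactly what guarantees that $\mathcal{H}'$ still has girth $g$. For every $t$ smaller than the shortest non-girth cycle length of $\mathcal{H}$, the only connected subhypergraphs on at most $t$ edges that can occur in either hypergraph are hyperpaths $P_{j}^{(k)}$ and the girth hypercycle $C_{g}^{(k)}$ with $0$, $1$ or $2$ pendant hyperpaths attached; the latter counts depend only on $g$ and on the common value $c=m+1-g$, and a careful (but routine) enumeration gives $N_{\mathcal{H}}(P_{j}^{(k)})=N_{\mathcal{H}'}(P_{j}^{(k)})$ for all such $j$. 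Evaluating $\mathrm{S}_{tk}$ from Equation (\ref{fan111}) as in Lemmas \ref{w3} and \ref{sp11}, we therefore get $\mathrm{S}_{tk}(\mathcal{H})=\mathrm{S}_{tk}(\mathcal{H}')$ for all such $t$, while at $t$ equal to the shortest non-girth cycle length of $\mathcal{H}'$ — which is strictly smaller than that of $\mathcal{H}$ — the hypergraph $\mathcal{H}'$ contains a hypercycle that $\mathcal{H}$ does not (an extra $C_{g}^{(k)}$ in the boundary case $c=b+1$), so $\mathrm{S}_{tk}(\mathcal{H}')>\mathrm{S}_{tk}(\mathcal{H})$ and hence $\mathcal{H}\prec_{s}\mathcal{H}'$. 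Iterating the move drives $(a,b)$ to $(\lfloor g/2\rfloor,\lceil g/2\rceil)$; back in the $C_{3,n,p,q,l}^{k}$-notation this is $C_{3,n,\frac{g-2}{2},\frac{g+2}{2},m-g}^{k}$ for even $g$ and $C_{3,n,g-\lceil\frac{g+2}{2}\rceil,\lceil\frac{g+2}{2}\rceil,m-g}^{k}$ for odd $g$, as claimed. The main obstacle is precisely this comparison: one must perform the subhypergraph enumeration carefully enough to obtain simultaneously the equalities of the lower-order moments and the strict inequality at the first order where the two hypergraphs separate, and one must handle the degenerate paths of length $1$ (which make the two branch edges adjacent) and the boundary values of $m$ allowed by the hypotheses.
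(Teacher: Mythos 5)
Your overall skeleton matches the paper's (reduce to maximum degree $2$, hence to the $\mathcal{C}_{3}$-type bases; eliminate pendant paths at the $3k$th moment; then separate the splittings of the girth cycle at successively higher moments), and your final answer agrees with the paper's. But the two counting claims on which your argument turns are both false, and they are precisely the steps you flag as the main obstacle. For girth $3$, the assertion that a pendant hyperpath strictly increases $N_{\mathcal{H}}(P_{3}^{(k)})$ is wrong: the hypergraph $\mathcal{H}_{2}$ obtained from $C_{3,n',1,2,1}^{k}$ by attaching a pendant path of length $m-4$ at a degree-one vertex of $P_{p}$ or $P_{l}$ has $N_{\mathcal{H}_{2}}(P_{3}^{(k)})=m-1$, while $N_{C_{3,n,1,2,m-3}^{k}}(P_{3}^{(k)})=m+1$, so the pure base has \emph{more} copies of $P_{3}^{(k)}$. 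The statement survives only because $\mathcal{H}_{2}$ contains two copies of $C_{3}^{(k)}$ against one for $C_{3,n,1,2,m-3}^{k}$, and in Lemma \ref{sp11} the coefficient of $N(C_{3}^{(k)})$ outweighs that of $N(P_{3}^{(k)})$: the paper computes $\mathrm{S}_{3k}(C_{3,n,1,2,m-3}^{k})-\mathrm{S}_{3k}(\mathcal{H}_{2})=6k^{3k-6}(k-1)^{n+2-3k}(4-3k)<0$. An argument that reduces to pure bases on the strength of the $P_{3}^{(k)}$ count alone cannot get past this case.

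The central pairwise comparison is also based on an incorrect enumeration. With your parametrization $(a,b,c)$, $a\le b$, $a+b=g$, $c=m+1-g$, you claim $N_{\mathcal{H}}(P_{j}^{(k)})=N_{\mathcal{H}'}(P_{j}^{(k)})$ for all $j$ below the shortest non-girth cycle length, so that the first separation comes from the extra short non-girth hypercycle of the less balanced candidate. In fact the hyperpath counts already differ at $j=a+3$ (the paper's $j=p+4$): the candidate with the shorter girth path has exactly one more copy of $P_{a+3}^{(k)}$, namely the path running through both branch edges $f_{1},f_{q}$ and the entire short girth path, extended by one edge at each end into the other two paths; this is the paper's dichotomy $N(P_{i}^{(k)})=m+3i-4$ versus $m+3i-5$. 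Since $a+3<a+c$ whenever $m>g+2$, the $S$-order between two splittings is decided at order $(a+3)k$, not at the order of the non-girth cycles; your mechanism happens to point in the same direction, but the equalities you assert below that order are false, so the chain of comparisons is not established. (Your list of admissible subhypergraphs is also incomplete: subtrees containing $f_{1}$ or $f_{q}$ with legs in all three directions are neither hyperpaths nor cycles with pendant paths, and their counts must be verified to agree, as the paper does.) Finally, note that the paper must treat $g=4$ and $g=5$ separately because the competitor $q=2$ (where $f_{1}$ and $f_{q}$ intersect) is only ruled out there by a $C_{4}^{(k)}$-versus-$P_{4}^{(k)}$ computation at the $4k$th moment; your uniform treatment skips this.
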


\begin{proof}
By Lemma \ref{sfdgdgg}, we get the first hypergraph in an $S$-order of all  hypergraphs with girth $g$  in $\mathcal{C}_{n}^{k}$ is in $\{\mathcal{C}_{3,n,p,q,l}^{k}~|~ q>2, 1\leq p\leq q-2\leq l\text{~or~} q=2,1\leq p\leq l  \text {~or~}q=1, k>3,1<p\leq l\}$ and its maximum degree is $2$.
For $m-p-l-q>0$, let $\mathcal{H}_{1}$ be the hypergraph  with girth $g$ and maximum degree 2 in $\mathcal{C}_{3,n,p,q,l}^{k}$, we move a pendant path of $\mathcal{H}_{1}$ from its terminal vertex to a pendant vertex which is not in the above pendant path. Repeating the above transformation, $\mathcal{H}_{1}$ can be changed into the $k$-uniform bicyclic hypergraph $\mathcal{H}_{2}$
obtained from $C_{3,n',p,q,l}^{k}$ by attaching a pendant path of length $m-p-l-q$ at a vertex with
degree 1. And the number of subhyperpath of length $3$ decreases.

When $g(\mathcal{H}_{2})=3$, we consider the following 4 cases.

Case 1. $p=1, q=2, l=1$. We have $N_{\mathcal{H}_{2}}(C_{3}^{(k)})=2$. When $\mathcal{H}_{2}$ is
obtained from $C_{3,n',1,2,1}^{k}$ by attaching a pendant path of length $m-4$ at the vertex with
degree $1$ of $P_{q}$. Then $N_{\mathcal{H}_{2}}(P_{3}^{(k)})=2+3+m-4-1=m$. When $\mathcal{H}_{2}$ is
obtained from $C_{3,n',1,2,1}^{k}$ by attaching a pendant path of length $m-4$ at the vertex with
degree $1$ of $P_{p}$ or $P_{l}$. Then $N_{\mathcal{H}_{2}}(P_{3}^{(k)})=2+2+m-4-1=m-1$.
When $m=4$, $N_{C_{3,n,1,2,1}^{k}}(C_{3}^{(k)})=2$ and $N_{C_{3,n,1,2,1}^{k}}(P_{3}^{(k)})=2$.

Case 2. $p=1,q=2,l>1$. We have $N_{\mathcal{H}_{2}}(C_{3}^{(k)})=1$. When $\mathcal{H}_{2}$ is
obtained from $C_{3,n',1,2,l}^{k}$ by attaching a pendant path of length $m-3-l$ at the vertex with
degree $1$ of $P_{q}$. Then $N_{\mathcal{H}_{2}}(P_{3}^{(k)})=l+2+2+m-3-l-1+3=m+3$. When $\mathcal{H}_{2}$ is
obtained from $C_{3,n',1,2,l}^{k}$ by attaching a pendant path of length $m-3-l$ at the vertex with
degree $1$ of $P_{p}$ or $P_{l}$. Then $N_{\mathcal{H}_{2}}(P_{3}^{(k)})=l+2+2+m-3-l-1+2=m+2$.
For $m-3-l=0$, we have $N_{C_{3,n,1,2,m-3}^{k}}(C_{3}^{(k)})=1$ and $N_{C_{3,n,1,2,m-3}^{k}}(P_{3}^{(k)})=2+m-3+2=m+1$.

Case 3. $p=2, q=1, l=2, k\geq4$. We have $N_{\mathcal{H}_{2}}(C_{3}^{(k)})=2$. When $\mathcal{H}_{2}$ is
obtained from $C_{3,n',2,1,2}^{k}$ by attaching a pendant path of length $m-5$ at the vertex with
degree $1$ of $P_{q}$. Then $N_{\mathcal{H}_{2}}(P_{3}^{(k)})=4+m-5-1+4=m+2$. When $\mathcal{H}_{2}$ is
obtained from $C_{3,n',2,1,2}^{k}$ by attaching a pendant path of length $m-5$ at the vertex with
degree $1$ of $P_{p}$ or $P_{l}$. Then $N_{\mathcal{H}_{2}}(P_{3}^{(k)})=4+m-5-1+2=m$. When $m=5$, we have $N_{C_{3,n,2,1,2}^{k}}(C_{3}^{(k)})=2$ and $N_{C_{3,n,2,1,2}^{k}}(P_{3}^{(k)})=4$.

Case 4. $p=2,q=1,l>2, k\geq4$. We have $N_{\mathcal{H}_{2}}(C_{3}^{(k)})=1$. When $\mathcal{H}_{2}$ is
obtained from $C_{3,n',2,1,l}^{k}$ by attaching a pendant path of length $m-3-l$ at the vertex with
degree $1$ of $P_{q}$. Then $N_{\mathcal{H}_{2}}(P_{3}^{(k)})=1+l+4+m-3-l-1+4=m+5$. When $\mathcal{H}_{2}$ is
obtained from $C_{3,n',2,1,l}^{k}$ by attaching a pendant path of length $m-3-l$ at the vertex with
degree $1$ of $P_{p}$ or $P_{l}$. Then $N_{\mathcal{H}_{2}}(P_{3}^{(k)})=1+l+4+2+m-3-l-1=m+3$.
For $m-3-l=0$, we have $N_{C_{3,n,2,1,m-3}^{k}}(C_{3}^{(k)})=1$ and $N_{C_{3,n,2,1,m-3}^{k}}(P_{3}^{(k)})=1+m-3+4=m+2$.


Therefore, when $m>4$, we compare the $3k$th order spectral moments of $\mathcal{H}_{2}$ and $C_{3,n,1,2,m-3}^{k}$, where $\mathcal{H}_{2}$ is
obtained from $C_{3,n',1,2,1}^{k}$ by attaching a pendant path of length $m-4$ at the vertex with
degree $1$ of $P_{p}$ or $P_{l}$. By Lemma \ref{sp11}, we have $S_{3k}(C_{3,n,1,2,m-3}^{k})-S_{3k}(\mathcal{H}_{2})=3k^{3k-5}(k-1)^{n+2-3k}2-24k^{3k-6}(k-1)^{n-3k+3}=6k^{3k-6}(k-1)^{n+2-3k}(-3k+4)<0.$
Hence, $C_{3,n,1,2,m-3}^{k}$ is the first hypergraph in an $S$-order of all  hypergraphs with girth $3$ in $\mathcal{C}_{n}^{k}$.
When $m=4$, obviously, $C_{3,n,1,2,1}^{k}$ is the first hypergraph in an $S$-order of all  hypergraphs with girth $3$ in $\mathcal{C}_{n}^{k}$.

Hence, when $m\geq 4$, $C_{3,n,1,2,m-3}^{k}$ is the first hypergraph in an $S$-order of all  hypergraphs with girth $3$ in $\mathcal{C}_{n}^{k}$.

When $g(\mathcal{H}_{2})>3$, we consider the following 3 cases.

Case 1. $q=1$. When $\mathcal{H}_{2}$ is
obtained from $C_{3,n',p,1,l}^{k}$ by attaching a pendant path of length $m-p-l-1$ at the vertex with
degree $1$ of $P_{q}$. Then $N_{\mathcal{H}_{2}}(P_{3}^{(k)})=p+1+l+1+4+m-p-1-l-1+4=m+8$. When $\mathcal{H}_{2}$ is
obtained from $C_{3,n',p,1,l}^{k}$ by attaching a pendant path of length $m-p-l-1$ at the vertex with
degree $1$ of $P_{p}$ or $P_{l}$. Then $N_{\mathcal{H}_{2}}(P_{3}^{(k)})=p+1+l+1+4+m-p-1-l-1+2=m+6$.
Since $N_{C_{3,n,p,1,m-p-1}^{k}}(P_{3}^{(k)})=p+1+1+m-p-1+4=m+5$, $N_{\mathcal{H}_{2}}(P_{3}^{(k)})>N_{C_{3,n,p,1,m-p-1}^{k}}(P_{3}^{(k)})$.

Case 2. $q=2$. When $\mathcal{H}_{2}$ is
obtained from $C_{3,n',p,2,l}^{k}$ by attaching a pendant path of length $m-p-l-2$ at the vertex with
degree $1$ of $P_{q}$. Then $N_{\mathcal{H}_{2}}(P_{3}^{(k)})=p+2+l+2+2+m-p-2-l-1+3=m+6$. When $\mathcal{H}_{2}$ is
obtained from $C_{3,n',p,2,l}^{k}$ by attaching a pendant path of length $m-p-l-2$ at the vertex with
degree $1$ of $P_{p}$ or $P_{l}$. Then $N_{\mathcal{H}_{2}}(P_{3}^{(k)})=p+2+l+2+2+m-p-2-l-1+2=m+5$. Since $N_{C_{3,n,p,2,m-p-2}^{k}}(P_{3}^{(k)})=p+2+2+m-p-2+2=m+4$, $N_{\mathcal{H}_{2}}(P_{3}^{(k)})>N_{C_{3,n,p,2,m-p-2}^{k}}(P_{3}^{(k)})$.

Case 3. $q>2$. When $\mathcal{H}_{2}$ is
obtained from $C_{3,n',p,q,l}^{k}$ by attaching a pendant path of length $m-p-l-q$ at the vertex with
degree $1$ of $f_{1}$ or $f_{q}$ of $P_{q}$. Then $N_{\mathcal{H}_{2}}(P_{3}^{(k)})=p+q+l+q-(q-2)+2+m-p-q-l-1+3=m+6$. When $\mathcal{H}_{2}$ is
obtained from $C_{3,n',p,q,l}^{k}$ by attaching a pendant path of length $m-p-l-q$ at the vertex with
degree $1$ of $P_{p}$, $P_{l}$ or $f_{2},\ldots, f_{q-1}$ of $P_{q}$. Then $N_{\mathcal{H}_{2}}(P_{3}^{(k)})=p+q+l+q-(q-2)+2+m-p-q-l-1+2=m+5$. Since $N_{C_{3,n,p,q,m-p-q}^{k}}(P_{3}^{(k)})=p+q+q+m-p-q+2-(q-2)=m+4$, $N_{\mathcal{H}_{2}}(P_{3}^{(k)})>N_{C_{3,n,p,q,m-p-q}^{k}}(P_{3}^{(k)})$.

Since $q>2, 1\leq p\leq q-2\leq l$, we have $\frac{g+2}{2}\leq q \leq g-1$.
Therefore, when $g=p+q>3$, $\{C_{3,n,p,q,m-p-q}^{k}~|~q=2\text{~or~}\frac{g+2}{2}\leq q \leq g-1\}$ have minimum $3k$th order spectral moment in all hypergraphs with maximum degree 2 and girth $g$ in $\{\mathcal{C}_{3,n,p,q,l}^{k}~|~ q>2, 1\leq p\leq q-2\leq l\text{~or~} q=2,1\leq p\leq l  \text {~or~}q=1, k>3,1<p\leq l\} $.

When $g=4$, we consider the following 2 cases.

Case 1. $q=2$.  For $m=6$, $N_{C_{3,n,2,2,2}^{k}}(P_{4}^{(k)})=6$, $N_{C_{3,n,2,2,2}^{k}}(C_{4}^{(k)})=2$, $N_{C_{3,n,2,2,2}^{k}}(W_{4})=0$ and $N_{C_{3,n,2,2,2}^{k}}(Q_{4})=2.$ For $m>6$, $N_{C_{3,n,2,2,m-4}^{k}}(P_{4}^{(k)})=m-4+2+6=m+4$, $N_{C_{3,n,2,2,m-4}^{k}}(C_{4}^{(k)})=1$, $N_{C_{3,n,2,2,m-4}^{k}}(W_{4})=0$ and $N_{C_{3,n,2,2,m-4}^{k}}(Q_{4})=2.$

Case 2. $q=3$. For $m=5$, $N_{C_{3,n,1,3,1}^{k}}(P_{4}^{(k)})=0$, $N_{C_{3,n,1,3,1}^{k}}(C_{4}^{(k)})=2$, $N_{C_{3,n,1,3,1}^{k}}(W_{4})=0$ and $N_{C_{3,n,1,3,1}^{k}}(Q_{4})=2.$
For $m>5$, $N_{C_{3,n,1,3,m-4}^{k}}(P_{4}^{(k)})=m-4+3+4=m+3$, $N_{C_{3,n,1,3,m-4}^{k}}(C_{4}^{(k)})=1$, $N_{C_{3,n,1,3,m-4}^{k}}(W_{4})=0$ and $N_{C_{3,n,1,3,m-4}^{k}}(Q_{4})=2.$


Hence, when $g=4,m=5$, $C_{3,n,1,3,1}^{k}$ is the first hypergraph in an $S$-order of all  hypergraphs with girth $4$ in $\mathcal{C}_{n}^{k}$.
When $g=4,m=6$, we have
\begin{align*}
&\mathrm{S}_{4k}(C_{3,n,2,2,2}^{k})-\mathrm{S}_{4k}(C_{3,n,1,3,2}^{k})\\
&=4k(k-1)^{n}(8\frac{k^{4k-8}}{8(k-1)^{4k-3}}(N_{C_{3,n,2,2,2}^{k}}(P_{4}^{(k)})-N_{C_{3,n,1,3,2}^{k}}(P_{4}^{(k)}))\\
&+(16\frac{8k^{4k-9}}{16(k-1)^{4k-4}}+2\frac{16k^{4k-9}}{16(k-1)^{4k-4}})(N_{C_{3,n,2,2,2}^{k}}(C_{4}^{(k)})-N_{C_{3,n,1,3,2}^{k}}(C_{4}^{(k)})))\\
&=4k^{4k-8}(k-1)^{n-4k+3}(7k-10)>0.
\end{align*}
Hence, when $m\geq6$, 
$C_{3,n,1,3,m-4}^{k}$ is the first hypergraph in an $S$-order of all  hypergraphs with girth $4$ in $\mathcal{C}_{n}^{k}$.

When $g>4$, we consider the following 2 cases.

Case 1. $q=2$.  We have $N_{C_{3,n,p,2,m-p-2}^{k}}(P_{4}^{(k)})=p+2+m-2-p+2+6=m+8$ and $N_{C_{3,n,p,2,m-p-2}^{k}}(Q_{4})=2.$


Case 2. $q\geq4$. We get $4<g\leq 2q-2$, $N_{C_{3,n,p,q,m-p-q}^{k}}(P_{4}^{(k)})=p+q+m-q-p+q-(q-3)+4=m+7$ and $N_{C_{3,n,p,q,m-p-q}^{k}}(Q_{4})=2.$

Hence, when $m\geq7$, $C_{3,n,1,4,m-5}^{k}$ is the first hypergraph in an $S$-order of all  hypergraphs with girth $5$ in $\mathcal{C}_{n}^{k}$.

When $g>5$, $\{C_{3,n,g-q,q,m-g}^{k}~|~\frac{g+2}{2}\leq q \leq g-1\}$ have minimum $4k$th order spectral moment in $\{C_{3,n,g-q,q,m-g}^{k}~|~q=2\text{~or~}\frac{g+2}{2}\leq q \leq g-1\}$.

Let $i\geq 5$.
For $g\geq 2i-4,m-g\geq i-2$, we consider the $ik$th order spectral moments of hypergraphs  in $\{C_{3,n,g-q,q,m-g}^{k}~|~\frac{g+2}{2}\leq q \leq g-i+4\}$.

Since $g\geq 2i-4$ and $i\geq 5$, the subhypergraph with $i$ edges which contain the hypercycle does not exist. There are only 2 edges $f_{1}, f_{q}$ of hypergraphs in $\{C_{3,n,g-q,q,m-g}^{k}~|~\frac{g+2}{2}\leq q \leq g-i+4\}$ that contain 3 vertices whose degree is equal to 2, and other edges contain at most 2 vertices whose degree is equal to 2. The minimum value of the distance between any vertex in $f_{1}$ and any vertex in $f_{q}$ is $i-4$. If there are 2 edges $e, e'$ of hypertrees with $i$ edges that contain 3 vertices of degree 2. The maximum value of the minimum value of distance between any vertex in $e$ and any vertex in $e'$ is $i-6$. Hence, there is 0 or 1 edge of subhypertrees with $i$ edges that contain 3 vertices whose degree is equal to 2, and other edges contain at most 2 vertices whose degree is equal to 2.

If there is 1 edge $e$ of subhypertrees with $i$ edges that contain 3 vertices whose degree is equal to 2, then the longest length of three pendant edges with different terminal vertices in $e$ is $i-3$.
Since $g\geq 2i-4, i\geq 5$ and $q\geq \frac{g+2}{2}$, $q\geq 4$.
Since $g-q\geq i-4, q\geq \frac{g+2}{2}\geq i-1$ and $m-g\geq i-2$, the number of subhypertrees with $i$ edges except $P_{i}^{(k)}$ of hypergraphs in $\{C_{3,n,g-q,q,m-g}^{k}~|~\frac{g+2}{2}\leq q \leq g-i+4\}$ is equal.
We have $N_{C_{3,n,i-4,g-i+4,m-g}^{k}}(P_{i}^{(k)})=g+m-g+g-i+4-(g-i+4)+i-1+2(i-2)+1=m+3i-4$ and $N_{C_{3,n,g-q,q,m-g}^{k}}(P_{i}^{(k)})=g+m-g+q-q+i-1+2(i-2)=m+3i-5,\frac{g+2}{2}\leq q <g-i+4.$ Hence, $\{C_{3,n,g-q,q,m-g}^{k}~|~\frac{g+2}{2}\leq q <g-i+4\}$ have minimum $ik$th order spectral moment  in $\{C_{3,n,g-q,q,m-g}^{k}~|~\frac{g+2}{2}\leq q \leq g-i+4\}$.

Hence, when $g=2i-4$, $C_{3,n,g-\frac{g+2}{2},\frac{g+2}{2},m-g}^{k}$ is the first hypergraph in an $S$-order of all  hypergraphs with girth $g$ in $\mathcal{C}_{n}^{k}$. When $g=2i-3$, $C_{3,n,g-\lceil\frac{g+2}{2}\rceil,\lceil\frac{g+2}{2}\rceil,m-g}^{k}$ is the first hypergraph in an $S$-order of all  hypergraphs with girth $g$ in $\mathcal{C}_{n}^{k}$.
\end{proof}
The following Theorem gives the first hypergraph in an $S$-order of all linear bicyclic $k$-uniform hypergraphs with $n$ vertices, $m$ edges and girth $g$.
\begin{thm}
In an $S$-order of all linear bicyclic $k$-uniform hypergraphs with $n$ vertices, $m$ edges and girth $g$.
\begin{equation*}
\text{the first hypergraph is }\begin{cases}
B_{3,n,g,t-4,t+1}^{k},& \text{if~} m=2t+g-3~(t>g+1),\notag \\
B_{3,n,g,t-3,t+1}^{k},& \text{if~} m=2t+g-2~(t\geq g+1).
\end{cases}
\end{equation*}
\end{thm}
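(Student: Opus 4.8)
The plan is to combine the two preceding theorems (the one identifying the first hypergraph within $\bigcup_{i=1}^{3}\{\mathcal{B}_{i,n,g,l,q}^{k}\}$ and the one identifying the first girth-$g$ hypergraph in $\mathcal{C}_{n}^{k}$) and then make a single head-to-head comparison. By the structure theorem of Section~2, every linear bicyclic $k$-uniform hypergraph with $n$ vertices, $m$ edges and girth $g$ lies in $\bigcup_{i=1}^{3}\{\mathcal{B}_{i,n,g,l,q}^{k}\mid q\ge g,\ l\ge 0\}$ or is a girth-$g$ hypergraph in $\mathcal{C}_{n}^{k}$. Just as in the proof of the theorem on the last hypergraph above, Lemmas~\ref{L2} and \ref{sdgg} make $\mathrm{S}_{0},\dots,\mathrm{S}_{2k-1}$ identical for all of them, while Lemma~\ref{w3}, \eqref{zsr1} and Lemma~\ref{sfdgdgg} show that the first hypergraph in the $S$-order has maximum degree $2$ and that $\mathrm{S}_{2k}$ is the same for all maximum-degree-$2$ members. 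Hence the first hypergraph overall is the $\prec_{s}$-smaller of $\mathcal{B}^{\ast}$ — the first hypergraph of the $\mathcal{B}$-theorem, equal to $B_{3,n,g,t-4,t+1}^{k}$ when $m=2t+g-3$ and to $B_{3,n,g,t-3,t+1}^{k}$ when $m=2t+g-2$ — and $\mathcal{C}^{\ast}$, the first girth-$g$ hypergraph of the $\mathcal{C}$-theorem. So it suffices to show $\mathcal{B}^{\ast}\prec_{s}\mathcal{C}^{\ast}$ in the two stated ranges $t>g+1$ and $t\ge g+1$.

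Both $\mathcal{B}^{\ast}$ and $\mathcal{C}^{\ast}$ are linear bicyclic with the same $n$, $m$, girth $g$, maximum degree $2$ and the same degree sequence, so every connected sub-hypergraph of either with $d$ edges is a copy of $P_{d}^{(k)}$ or of $C_{d}^{(k)}$. Running the bookkeeping of Lemmas~\ref{w3} and \ref{sp11} through \eqref{fan111}, one therefore gets, for each $d$,
\begin{equation*}
\mathrm{S}_{dk}(\mathcal{H})=\sum_{j=1}^{d}\alpha_{d,j}\,N_{\mathcal{H}}(P_{j}^{(k)})+\sum_{j=g}^{d}\beta_{d,j}\,N_{\mathcal{H}}(C_{j}^{(k)}),\qquad\alpha_{d,j},\beta_{d,j}>0,
\end{equation*}
where the coefficients depend only on $d$, $k$ and $|V(\mathcal{H})|$ and hence are the same for $\mathcal{B}^{\ast}$ and $\mathcal{C}^{\ast}$, together with $\mathrm{S}_{d}=0$ for $k\nmid d$ (Lemma~\ref{sdgg}). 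So the comparison comes down to comparing the two sequences of numbers $N(P_{j}^{(k)})$ and $N(C_{j}^{(k)})$ and locating the first index at which they disagree. Taking edge-intersection graphs — which, as the hypergraphs are linear of maximum degree $2$, identifies $N_{\mathcal{H}}(P_{j}^{(k)})$ with the number of $j$-vertex paths — presents $\mathcal{B}^{\ast}$ as a \emph{dumbbell} (two vertex-disjoint cycles of lengths $g$ and $t+1$ joined by a path) and $\mathcal{C}^{\ast}$ as a \emph{theta} (a $g$-cycle carrying a long path whose two ends are attached at two distinct vertices of that $g$-cycle, which split it into two arcs of roughly equal length), each on $m$ vertices with $m+1$ edges and exactly two non-adjacent vertices of degree $3$.

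Since the two graphs are locally identical with the same degree sequence, their path-counts and their (vanishing, up to length $g$) cycle-counts agree for all small $j$, and $N_{\mathcal{B}^{\ast}}(C_{g}^{(k)})=N_{\mathcal{C}^{\ast}}(C_{g}^{(k)})=1$. The two degree-$3$ vertices of $\mathcal{C}^{\ast}$ lie on the $g$-cycle at distance about $g/2$, whereas those of $\mathcal{B}^{\ast}$ are at distance $t-3$ or $t-4$; consequently, once $j$ is large enough for a path to run from one of these vertices to the other along the $g$-cycle, $\mathcal{C}^{\ast}$ has strictly more $j$-vertex paths than $\mathcal{B}^{\ast}$. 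One verifies that the first index $d_{0}$ at which the count sequences differ satisfies $d_{0}\le t$, that there $N_{\mathcal{B}^{\ast}}(P_{d_{0}}^{(k)})<N_{\mathcal{C}^{\ast}}(P_{d_{0}}^{(k)})$, and that the cycle-counts $N(C_{j}^{(k)})$ still agree for $j\le d_{0}$ (their first discrepancy being $N_{\mathcal{B}^{\ast}}(C_{t+1}^{(k)})=1>0=N_{\mathcal{C}^{\ast}}(C_{t+1}^{(k)})$, at index $t+1>d_{0}$). By the displayed formula this gives $\mathrm{S}_{ik}(\mathcal{B}^{\ast})=\mathrm{S}_{ik}(\mathcal{C}^{\ast})$ for $i<d_{0}$ and $\mathrm{S}_{d_{0}k}(\mathcal{B}^{\ast})<\mathrm{S}_{d_{0}k}(\mathcal{C}^{\ast})$, i.e.\ $\mathcal{B}^{\ast}\prec_{s}\mathcal{C}^{\ast}$.

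The main obstacle is the last step: the explicit count of $j$-vertex paths in the dumbbell and in the theta, from which one must pin down $d_{0}$ and the sign of $N_{\mathcal{B}^{\ast}}(P_{d_{0}}^{(k)})-N_{\mathcal{C}^{\ast}}(P_{d_{0}}^{(k)})$. This requires a case analysis on $g$ (in particular on its parity, which fixes the exact lengths of the two arcs into which the attachment points split the $g$-cycle of $\mathcal{C}^{\ast}$), and it is precisely the hypotheses $t>g+1$ and $t\ge g+1$ that keep the bridge and second cycle of $\mathcal{B}^{\ast}$ long enough to force $d_{0}\le t$, so that the first discrepancy falls in the range governed by the path-counts and tips in favour of $\mathcal{B}^{\ast}$; outside these ranges the first hypergraph is instead of $\mathcal{C}$-type.
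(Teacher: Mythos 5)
Your overall strategy (reduce to a head-to-head comparison between the winner $\mathcal{B}^{\ast}$ of the $\mathcal{B}$-family and the winner $\mathcal{C}^{\ast}$ of the $\mathcal{C}$-family, then locate the first spectral moment at which they differ) matches the paper. But the step you rest the whole comparison on is false: it is not true that in a linear bicyclic hypergraph of maximum degree $2$ every connected sub-hypergraph with $d$ edges is a copy of $P_{d}^{(k)}$ or $C_{d}^{(k)}$. Maximum degree $2$ bounds vertex degrees, not the number of edges a given edge meets: an edge has $k$ vertices, each of which may lie in one further edge, so the edge-intersection (line) graph branches and connected sub-hypergraphs such as $Q_{t}$ (a hyperpath with an extra pendant edge) and $W_{t}$ (a hypercycle with a pendant edge) occur. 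Consequently your displayed formula $\mathrm{S}_{dk}=\sum_{j}\alpha_{d,j}N(P_{j}^{(k)})+\sum_{j}\beta_{d,j}N(C_{j}^{(k)})$ omits terms, and Equation \eqref{fan111} genuinely produces contributions from these branched sub-hypergraphs for $d\geq 4$ (the paper's own Lemma \ref{sp11} already carries an $N(S_{3}^{(k)})$ term at $d=3$).

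This is not a repairable bookkeeping issue, because in the cases that decide the theorem the path and cycle counts either agree or point the wrong way, and the verdict is delivered precisely by the $Q_{t}$ and $W_{t}$ counts. For girth $3$ the paper computes $N_{B}(P_{4}^{(k)})=m+2>m+1=N_{C}(P_{4}^{(k)})$ while $N_{C}(C_{4}^{(k)})=N_{B}(C_{4}^{(k)})=0$; on your formula this would make $\mathcal{C}^{\ast}$ the smaller hypergraph at $d_{0}=4$, i.e.\ the wrong conclusion. The actual sign $\mathrm{S}_{4k}(C)-\mathrm{S}_{4k}(B)=8k^{4k-8}(k-1)^{n-4k+3}(3k-4)>0$ comes from combining the $P_{4}$, $Q_{4}$ and $W_{4}$ terms ($N_{C}(W_{4})=2$ vs.\ $N_{B}(W_{4})=1$, $N_{C}(Q_{4})=0$ vs.\ $N_{B}(Q_{4})=1$). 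For girth $4$ the two candidates have equal $P_{5}^{(k)}$ and $C_{5}^{(k)}$ counts, and the first discrepancy at $d=5$ lives entirely in $N(Q_{5})$ and $N(W_{5})$; for even girth $g\geq 8$ the discriminating quantity is $N(Q_{g/2+4})$. So your claimed first discrepancy index $d_{0}$, the identity of the sub-hypergraphs realizing it, and the sign of the difference are all wrong in general, and the dumbbell-versus-theta path count analysis you defer to the end cannot close the argument. You would need to enlarge the sub-hypergraph inventory to include at least $Q_{t}$ and $W_{t}$, determine their coefficients in \eqref{fan111}, and redo the case analysis on $g$ as the paper does.
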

\begin{proof}
For $m-3-q>0$ and $q>4$, $N_{B_{3,n,3,m-3-q,q}^{k}}(P_{3}^{(k)})=m+1$, $N_{B_{3,n,3,m-3-q,q}^{k}}(C_{3}^{(k)})=1$. And $N_{C_{3,n,1,2,m-3}^{k}}(P_{3}^{(k)})=m+1$, $N_{C_{3,n,1,2,m-3}^{k}}(C_{3}^{(k)})=1$. Therefore, the $3k$th order spectral moments of $\{B_{3,n,3,m-3-q,q}^{k}~|~m-3-q>0,q>4\}$ and $C_{3,n,1,2,m-3}^{k}$ are the equal.
When $m>8$, $N_{B_{3,n,3,m-3-q,q}^{k}}(P_{4}^{(k)})=m+2$, $N_{B_{3,n,3,m-3-q,q}^{k}}(W_{4})=1$, $N_{B_{3,n,3,m-3-q,q}^{k}}(C_{4}^{(k)})=0$, $N_{B_{3,n,3,m-3-q,q}^{k}}(Q_{4})=1,$ $N_{C_{3,n,1,2,m-3}^{k}}(P_{4}^{(k)})=m+1$, $N_{C_{3,n,1,2,m-3}^{k}}(W_{4})=2$, $N_{C_{3,n,1,2,m-3}^{k}}(C_{4}^{(k)})=0$ and $N_{C_{3,n,1,2,m-3}^{k}}(Q_{4})=0.$
Then by Equation (\ref{fan111}), we have
\begin{align*}
&\mathrm{S}_{4k}(C_{3,n,1,2,m-3}^{k})-\mathrm{S}_{4k}(B_{3,n,3,m-3-q,q}^{k})\\
&=4k(k-1)^{n}(8\frac{k^{4k-8}}{8(k-1)^{4k-3}}(N_{C_{3,n,1,2,m-3}^{k}}(P_{4}^{(k)})-N_{B_{3,n,3,m-3-q,q}^{k}}(P_{4}^{(k)}))\\
&+8\frac{k^{4k-8}}{8(k-1)^{4k-3}}(N_{C_{3,n,1,2,m-3}^{k}}(Q_{4})-N_{B_{3,n,3,m-3-q,q}^{k}}(Q_{4}))\\
&+(16\frac{6k^{4k-9}}{16(k-1)^{4k-4}}+4\frac{8k^{4k-9}}{16(k-1)^{4k-4}})(N_{C_{3,n,1,2,m-3}^{k}}(W_{4})-N_{B_{3,n,3,m-3-q,q}^{k}}(W_{4})))\\
&=8k^{4k-8}(k-1)^{n-4k+3}(3k-4)>0.
\end{align*}
Hence, in an $S$-order of all linear bicyclic $k$-uniform hypergraphs with $n$ vertices, $m$ edges and girth $3$, the first hypergraph is in $\{B_{3,n,3,m-3-q,q}^{k}~|~ m-3-q>0,q>4\}$.

When $g\geq 4$. For $q\geq g$ and $m-g-q\geq 0$, we have $N_{B_{3,n,g,m-g-q,q}^{k}}(P_{3}^{(k)})=m+4, N_{B_{3,n,g,m-g-q,q}^{k}}(C_{3}^{(k)})=0$. For $q=2\text{~or~}\frac{g+2}{2}\leq q \leq g-1$, we have $N_{C_{3,n,p,q,m-p-q}^{k}}(P_{3}^{(k)})=m+4, N_{C_{3,n,p,q,m-p-q}^{k}}(C_{3}^{(k)})=0$.
Hence, for $g\geq 4$, the $3k$th order spectral moments of $\{B_{3,n,g,m-g-q,q}^{k}~|~q\geq g,m-g-q\geq 0\}$ and $\{C_{3,n,p,q,m-p-q}^{k}~|~q=2\text{~or~}\frac{g+2}{2}\leq q \leq g-1\}$ are the equal.

When $m>11$. For $m-4-q>1$ and $q>5$, $N_{B_{3,n,4,m-4-q,q}^{k}}(P_{4}^{(k)})=m+3$, $N_{B_{3,n,4,m-4-q,q}^{k}}(C_{4}^{(k)})=1$, $N_{B_{3,n,4,m-4-q,q}^{k}}(W_{4})=0$, $N_{B_{3,n,4,m-4-q,q}^{k}}(Q_{4})=2$. And $N_{C_{3,n,1,3,m-4}^{k}}(P_{4}^{(k)})=m+3$, $N_{C_{3,n,1,3,m-4}^{k}}(C_{4}^{(k)})=1$, $N_{C_{3,n,1,3,m-4}^{k}}(W_{4})=0$, $N_{C_{3,n,1,3,m-4}^{k}}(Q_{4})=2.$
Therefore, the $4k$th order spectral moments of $\{B_{3,n,4,m-4-q,q}^{k}~|~m-4-q>1,q>5\}$ and $C_{3,n,1,3,m-4}^{k}$ are the equal.
We have $N_{C_{3,n,1,3,m-4}^{k}}(P_{5}^{(k)})=m-4+3+5=m+4, N_{C_{3,n,1,3,m-4}^{k}}(Q_{5})=2$, $N_{C_{3,n,1,3,m-4}^{k}}(W_{5})=2$, $N_{B_{3,n,4,m-4-q,q}^{k}}(P_{5}^{(k)})=m+4, N_{B_{3,n,4,m-4-q,q}^{k}}(Q_{5})=4$, $N_{B_{3,n,4,m-4-q,q}^{k}}(W_{5})=1$. Then by Equation (\ref{fan111}), we have
\begin{align*}
&\mathrm{S}_{5k}(C_{3,n,1,3,m-4}^{k})-\mathrm{S}_{5k}(B_{3,n,4,m-4-q,q}^{k})\\
&=5k(k-1)^{n}(16\frac{k^{5k-10}}{16(k-1)^{5k-4}}(N_{C_{3,n,1,3,m-4}^{k}}(Q_{5})-N_{B_{3,n,4,m-4-q,q}^{k}}(Q_{5}))\\
&+(32\frac{8k^{5k-11}}{32(k-1)^{5k-5}}+4\frac{16k^{5k-11}}{32(k-1)^{5k-5}})(N_{C_{3,n,1,3,m-4}^{k}}(W_{5})-N_{B_{3,n,4,m-4-q,q}^{k}}(W_{5})))\\
&=10k^{5k-10}(k-1)^{n-5k+4}(4k-5)>0.
\end{align*}
Hence, in an $S$-order of all linear bicyclic $k$-uniform hypergraphs with $n$ vertices, $m$ edges and girth $4$, the first hypergraph is in $\{B_{3,n,4,m-4-q,q}^{k}~|~ m-4-q>1,q>5\}$.

When $g\geq 5$. For $m-g-q>0, q\geq g$, we have $N_{B_{3,n,g,m-g-q,q}^{k}}(P_{4}^{(k)})=m+7$, $N_{B_{3,n,g,m-g-q,q}^{k}}(W_{4})=0$ and $N_{B_{3,n,g,m-g-q,q}^{k}}(Q_{4})=2$. For $\frac{g+2}{2}\leq q \leq g-1$, we have $N_{C_{3,n,p,q,m-p-q}^{k}}(P_{4}^{(k)})=m+7, N_{C_{3,n,p,q,m-p-q}^{k}}(W_{4})=0$ and $N_{C_{3,n,p,q,m-p-q}^{k}}(Q_{4})=2.$
Hence, for $g\geq 5$, the $4k$th order spectral moments of $\{B_{3,n,g,m-g-q,q}^{k}~|~m-g-q>0,q\geq g\}$ and $\{C_{3,n,p,q,m-p-q}^{k}~|~\frac{g+2}{2}\leq q \leq g-1\}$ are the equal.

When $m>12$.
For $C_{3,n,1,4,m-5}^{k}$ and $\{B_{3,n,5,m-5-q,q}^{k}~|~ m-5-q>1,q>5\}$, since the number of the other subhypergraphs with $5$ edges except $P_{5}^{(k)}$ is equal, we only need consider the number of $P_{5}^{(k)}$.
For $m-5-q>1$ and $q>5$, we have  $N_{B_{3,n,5,m-5-q,q}^{k}}(P_{5}^{(k)})=m+5$.
And $N_{C_{3,n,1,4,m-5}^{k}}(P_{5}^{(k)})=m+6$. Therefore, for $m-5-q>1$ and $q>5$, we have $\mathrm{S}_{5k}(C_{3,n,1,4,m-5}^{k})>\mathrm{S}_{5k}(B_{3,n,5,m-5-q,q}^{k}).$
Hence, in an $S$-order of all linear bicyclic $k$-uniform hypergraphs with $n$ vertices, $m$ edges and girth $5$, the first hypergraph is in $\{B_{3,n,5,m-5-q,q}^{k}~|~ m-5-q>1,q>5\}$.

When $g>5$ and $t\geq 5$. If $t<g$, for $m-g-q>t-4$ and $q\geq g$, we have $N_{B_{3,n,g,m-g-q,q}^{k}}(P_{t}^{(k)})=m+3t-5$.
If $t=g$ and $m>3g-3$, for $m-g-q>g-4$ and $q>g$, we have $N_{B_{3,n,g,m-g-q,q}^{k}}(P_{g}^{(k)})=2g-5+m$ and $N_{B_{3,n,g,m-g-q,q}^{k}}(C_{g}^{(k)})=1$.
If $t>g$ and $m>2t+g-3$, for $m-g-q>t-4$ and $q>t$, we have $N_{B_{3,n,g,m-g-q,q}^{k}}(P_{t}^{(k)})=t+m+g-5$ and $N_{B_{3,n,g,m-g-q,q}^{k}}(C_{t}^{(k)})=0$.

Let $t\geq 5$. For $g\geq 2t-4$ and $m-g\geq t-2$,  when $g$ is even, if $t\leq\frac{g}{2}+2$, we have $N_{C_{3,n,g-\frac{g+2}{2},\frac{g+2}{2},m-g}^{k}}(P_{t}^{(k)})=m+3t-5$.
If $\frac{g}{2}+2<t<g$, we have $N_{C_{3,n,g-\frac{g+2}{2},\frac{g+2}{2},m-g}^{k}}(P_{t}^{(k)})=g+m-g+\frac{g+2}{2}+2(t-2)+t-3-g+\frac{g+2}{2}=m+3t-5$.
If $t=g$, we have $N_{C_{3,n,g-\frac{g+2}{2},\frac{g+2}{2},m-g}^{k}}(P_{g}^{(k)})=m-g+\frac{g+2}{2}+2(g-2)+g-3-g+\frac{g+2}{2}=m+2g-5$.
If $5\leq t\leq \frac{g}{2}+3$, for $C_{3,n,g-\frac{g+2}{2},\frac{g+2}{2},m-g}^{k}$ and $\{B_{3,n,g,m-g-q,q}^{k}~|~ m-g-q>t-4,q>g\}$, the number of the subhypergraphs with $t$ edges is equal.

When $g$ is even, $g\geq 8$ and $t=\frac{g}{2}+4$, for $C_{3,n,g-\frac{g+2}{2},\frac{g+2}{2},m-g}^{k}$ and $\{B_{3,n,g,m-g-q,q}^{k}~|~ m-g-q>\frac{g}{2},q>g\}$, the number of the subhypergraphs with $\frac{g}{2}+4$ edges except $Q_{\frac{g}{2}+4}$ is equal. And for $m-g-q>\frac{g}{2}$ and $q>g$, we have
$N_{B_{3,n,g,m-g-q,q}^{k}}(Q_{\frac{g}{2}+4})<N_{C_{3,n,g-\frac{g+2}{2},\frac{g+2}{2},m-g}^{k}}(Q_{\frac{g}{2}+4}).$ Hence, when $g$ is even and $g\geq 8$, in an $S$-order of all linear bicyclic $k$-uniform hypergraphs with $n$ vertices, $m$ edges and girth $g$, the first hypergraph is in $\{B_{3,n,g,m-g-q,q}^{k}~|~ m-g-q>\frac{g}{2},q>g\}$.

When $g=6$.  For $t=\frac{g}{2}+4=7$, we have $N_{C_{3,n,2,4,m-6}^{k}}(P_{7}^{(k)})=m-6+4+8+2=m+8$ and
$N_{C_{3,n,2,4,m-6}^{k}}(W_{7})=2$. For $m-6-q>3$ and $q>7$, we have $N_{B_{3,n,6,m-6-q,q}^{k}}(P_{7}^{(k)})=m+8$ and $N_{B_{3,n,6,m-6-q,q}^{k}}(W_{7})=1$.
The number of subhypertrees with $7$ edges of $B_{3,n,6,m-6-q,q}^{k}$ is less than or equal to the number of subhypertrees with $7$ edges of $C_{3,n,2,4,m-6}^{k}.$
Hence,
in an $S$-order of all linear bicyclic $k$-uniform hypergraphs with $n$ vertices, $m$ edges and girth $6$, the first hypergraph is in $\{B_{3,n,6,m-6-q,q}^{k}~|~ m-6-q>3,q>7\}$.

When $g\geq 7$ and $g$ is odd, if $5\leq t\leq\frac{g}{2}+\frac{3}{2}$, we have $N_{C_{3,n,g-\lceil\frac{g+2}{2}\rceil,\lceil\frac{g+2}{2}\rceil,m-g}^{k}}(P_{t}^{(k)})=m+3t-5$.
For $C_{3,n,g-\lceil\frac{g+2}{2}\rceil,\lceil\frac{g+2}{2}\rceil,m-g}^{k}$ and $\{B_{3,n,g,m-g-q,q}^{k}~|~ m-g-q>t-4,q>g\}$, the number of the subhypergraphs with $t$ edges is equal. If $t=\frac{g}{2}+\frac{5}{2}$, we have $N_{C_{3,n,g-\lceil\frac{g+2}{2}\rceil,\lceil\frac{g+2}{2}\rceil,m-g}^{k}}(P_{\frac{g}{2}+\frac{5}{2}}^{(k)})=g+m-g+\lceil\frac{g+2}{2}\rceil+2(\frac{g}{2}+\frac{5}{2}-2)+1=m+\frac{3g}{2}+\frac{7}{2}$.
For $m-g-q>\frac{g}{2}-\frac{3}{2}$ and $q>g$, we have $N_{B_{3,n,g,m-g-q,q}^{k}}(P_{\frac{g}{2}+\frac{5}{2}}^{(k)})=m+\frac{3g}{2}+\frac{5}{2}$.
For $C_{3,n,g-\lceil\frac{g+2}{2}\rceil,\lceil\frac{g+2}{2}\rceil,m-g}^{k}$ and $\{B_{3,n,g,m-g-q,q}^{k}~|~ m-g-q>\frac{g}{2}-\frac{3}{2},q>g\}$, the number of the subhypergraphs with $\frac{g}{2}+\frac{5}{2}$ edges except $P_{\frac{g}{2}+\frac{5}{2}}^{(k)}$ is equal.
Hence, when $g\geq 7$ and $g$ is odd,
in an $S$-order of all linear bicyclic $k$-uniform hypergraphs with $n$ vertices, $m$ edges and girth $g$, the first hypergraph is in $\{B_{3,n,g,m-g-q,q}^{k}~|~ m-g-q>\frac{g}{2}-\frac{3}{2},q>g\}$.


\end{proof}
%
%
\vspace{3mm}

\noindent
\textbf{Acknowledgments}
\vspace{3mm}
\noindent

This work is supported by the National Natural Science Foundation of China (No. 12071097, No. 12371344), the Natural Science Foundation for The Excellent Youth Scholars of the Heilongjiang Province (No. YQ2022A002) and the Fundamental Research Funds for the
Central Universities.

\section*{References}
\bibliographystyle{unsrt}
\bibliography{pbib}

\begin{thebibliography}{10}

\bibitem{qi2005eigenvalues}
L.~Qi.
\newblock Eigenvalues of a real supersymmetric tensor.
\newblock {\em J. Symbolic Comput.}, 40(6):1302--1324, 2005.

\bibitem{cooper2012spectra}
J.~Cooper and A.~Dutle.
\newblock Spectra of uniform hypergraphs.
\newblock {\em Linear Algebra Appl.}, 436(9):3268--3292, 2012.

\bibitem{2309.16925}
H.~Zhou and C.~Bu.
\newblock Lexicographical ordering of hypergraphs via spectral moments.
\newblock {\em Available at arXiv: 2309.16925}.

\bibitem{Drago1984A}
D.~Cvetkovi{\'c} and M.~Petri{\'c}.
\newblock A table of connected graphs on six vertices.
\newblock {\em Discrete Math.}, 50:37--49, 1984.

\bibitem{cvetkovic1987spectra}
D.~Cvetkovi{\'c} and P.~Rowlinson.
\newblock Spectra of unicyclic graphs.
\newblock {\em Graphs Comb.}, 3:7--23, 1987.

\bibitem{2014On1111}
Y.~Wu and Q.~Fan.
\newblock On the lexicographical ordering by spectral moments of bicyclic
  graphs.
\newblock {\em Ars Comb.}, 114:213--222, 2014.

\bibitem{WU20101707}
Y.~Wu and H.~Liu.
\newblock Lexicographical ordering by spectral moments of trees with a
  prescribed diameter.
\newblock {\em Linear Algebra Appl.}, 433:1707--1713, 2010.

\bibitem{PAN20111265}
X.~Pan, X.~Hu, X.~Liu, and H.~Liu.
\newblock The spectral moments of trees with given maximum degree.
\newblock {\em Appl. Math. Lett.}, 24(7):1265--1268, 2011.

\bibitem{CHENG2012858}
B.~Cheng and B.~Liu.
\newblock Lexicographical ordering by spectral moments of trees with $k$
  pendant vertices and integer partitions.
\newblock {\em Appl. Math. Lett.}, 25(5):858--861, 2012.

\bibitem{CHENG20121123}
B.~Cheng, B.~Liu, and J.~Liu.
\newblock On the spectral moments of unicyclic graphs with fixed diameter.
\newblock {\em Linear Algebra Appl.}, 437(4):1123--1131, 2012.

\bibitem{SHUCHAO2013ON}
S.~Li, H.~Zhang, and M.~Zhang.
\newblock On the spectral moment of graphs with $k$ cut edges.
\newblock {\em Electron. J. Linear Algebra}, 26:718--731, 2013.

\bibitem{10.1216/RMJ-2016-46-1-261}
S.~Li and S.~Hu.
\newblock On the spectral moment of graphs with given clique number.
\newblock {\em Rocky Mountain J. Math.}, 46(1):261--282, 2016.

\bibitem{clark2021harary}
G.~Clark and J.~Cooper.
\newblock A {Harary-Sachs} theorem for hypergraphs.
\newblock {\em J. Combin. Theory Ser. B}, 149:1--15, 2021.

\bibitem{cooper2}
Y.~Fan, T.~Huang, Y.~Bao, C.~Zhuan-Sun, and Y.~Li.
\newblock The spectral symmetry of weakly irreducible nonnegative tensors and
  connected hypergraphs.
\newblock {\em Trans. Amer. Math. Soc.}, 372(3):2213--2233, 2019.

\bibitem{doi:10.1137/21M1404740}
G.~Gao, A.~Chang, and Y.~Hou.
\newblock Spectral radius on linear $r$-graphs without expanded ${K}_{r+1}$.
\newblock {\em SIAM J. Discrete Math.}, 36(2):1000--1011, 2022.

\bibitem{2022Thestabilizing}
Y.~Fan, M.~Tian, and M.~Li.
\newblock The stabilizing index and cyclic index of the coalescence and
  cartesian product of uniform hypergraphs.
\newblock {\em J. Combin. Theory Ser. A}, 185:105537, 2022.

\bibitem{ref23}
L.~Chen, E.R. van Dam, and C.~Bu.
\newblock Spectra of power hypergraphs and signed graphs via parity-closed
  walks.
\newblock {\em J. Combin. Theory Ser. A}, 207:105909, 2024.

\bibitem{shao2015some}
J.~Shao, L.~Qi, and S.~Hu.
\newblock Some new trace formulas of tensors with applications in spectral
  hypergraph theory.
\newblock {\em Linear Multilinear Algebra}, 63(5):971--992, 2015.

\bibitem{FAN202389}
Y.~Fan, Y.~Yang, C.~She, J.~Zheng, Y.~Song, and H.~Yang.
\newblock The trace and {E}strada index of uniform hypergraphs with cut
  vertices.
\newblock {\em Linear Algebra Appl.}, 660:89--117, 2023.

\bibitem{doi:10.1080/03081087.2021.1953431}
L.~Chen, C.~Bu, and J.~Zhou.
\newblock Spectral moments of hypertrees and their applications.
\newblock {\em Linear Multilinear Algebra}, 70(21):6297--6311, 2022.

\bibitem{2014cccA}
S.~Hu and L.~Qi.
\newblock The eigenvectors associated with the zero eigenvalues of the
  laplacian and signless laplacian tensors of a uniform hypergraph.
\newblock {\em Discrete Appl. Math.}, 169:140--151, 2014.

\bibitem{2013Cored}
S.~Hu, L.~Qi, and J.~Shao.
\newblock Cored hypergraphs, power hypergraphs and their laplacian
  h-eigenvalues.
\newblock {\em Linear Algebra Appl.}, 439(10):2980--2998, 2013.

\bibitem{lim2005singular}
L.~Lim.
\newblock Singular values and eigenvalues of tensors: a variational approach.
\newblock In {\em Proceedings 1st IEEE International Workshop on Computational
  Advances of Multitensor Adaptive Processing}, pages 129--132. IEEE, 2005.

\bibitem{Kau2020Energies}
K.~Cardoso and V.~Trevisan.
\newblock Energies of hypergraphs.
\newblock {\em Electron. J. Linear Algebra}, 36:293--308, 2020.

\bibitem{sdlfhpergnk}
H.~Zhou and C.~Bu.
\newblock Extremal {Z}agreb indices of bicyclic hypergraphs.
\newblock {\em Available at arXiv: 2506.08875}.

\bibitem{2023irregularity}
H.~Lin, G.~Yu, and B.~Zhou.
\newblock On the irregularity of uniform hypergraphs.
\newblock {\em Linear Algebra Appl.}, 678:107--124, 2023.

\end{thebibliography}
\end{spacing}
\end{document}